\title[Comparison between two differential graded algebras in NCG]{Comparison between two differential graded algebras\\
in Noncommutative Geometry}
\author{Partha Sarathi Chakraborty}
\address{The Institute of Mathematical Sciences, CIT Campus, Taramani, Chennai
600113}
\email{parthac@imsc.res.in}
\author{Satyajit Guin}
\address{Indian Institute of Science Education and Research, Mohali, Punjab 140306}
\email{satyamath@gmail.com , satyajit@iisermohali.ac.in}
\thanks{$\dagger$ Satyajit Guin is supported by INSPIRE Faculty Award (Award No. DST/INSPIRE/04/2015/000901)}
\keywords{Dirac dga, Connes' calculus, FGR dga, spectral triple, quantum double suspension}
\date{\today}
\subjclass[2010]{Primary 58B34; Secondary 46L87, 16E45}
\newcommand{\hooklongrightarrow}{\lhook\joinrel\longrightarrow}
\newcommand{\twoheadlongrightarrow}{\relbar\joinrel\twoheadrightarrow}
\newtheorem{definition}{Definition}[section]
\newtheorem{theorem}[definition]{Theorem}
\newtheorem{lemma}[definition]{Lemma}
\newtheorem{proposition}[definition]{Proposition}
\newtheorem{corollary}[definition]{Corollary}
\newtheorem{remark}[definition]{Remark}
\begin{document}

\begin{abstract}
Starting with a spectral triple one can associate two canonical differential graded algebras (dga) defined by Connes and Fr\"{o}hlich et\ al. For
the classical spectral triples associated with compact Riemannian spin manifolds both these dgas coinside with the de-Rham dga. Therefore, both are
candidates for the noncommutative space of differential forms. Here we compare these two dgas and observe that in a very precise sense Connes' dga
is more informative than that of Fr\"{o}hlich et al.
\end{abstract}

\maketitle

\section{Introduction}
A differential calculus on a ``space'' means the specification of a differential graded algebra (dga), often interpreted as space of forms. In
classical geometry the ``space'' is a manifold and we have the de-Rham dga, whereas in noncommutative geometry a ``space'' is described by a triple
called spectral triple. A spectral triple is a tuple $(\mathcal{A},\mathcal{H},D)$ where $\mathcal{A}$ is an associative $\,\star$-algebra represented
on the Hilbert space $\mathcal{H}$ and $D$ is a Dirac-type operator on $\mathcal{H}$. Associated to a spectral triple there are two canonical dgas
defined by Connes (\cite{con}) and Fr\"{o}hlich et\ al (\cite{fgr}). In literature, these are denoted by $\,\Omega_D^\bullet(\mathcal{A})$ and
$\widetilde{\Omega}_D^\bullet(\mathcal{A})$ respectively, and here we call them as the Dirac dga and the FGR dga. Note that in (\cite{cg}) we have
called the Dirac dga as the Connes' calculus. It should be noted that for the classical spectral triple associated with compact Riemannian spin
manifolds both these dgas coincide with the de-Rham dga (\cite{con},\cite{fgr}). Therefore, both are candidates to be declared as noncommutative
space of forms. Moreover, they are same for the noncommutative torus (Page $172$ in \cite{fgr}) but not for the $SU_q(2)$ (\cite{cp}). Hence, it is
natural to ask if there is any way to compare these two dgas so that one can declare one of them as truely the noncommutative space of forms. This
is important because both being generalization of the classical de-Rham forms to the noncommutative set up, any notion in noncommutative geometry
involving the noncommutative space of forms, e.g. the Yang-Mills functional (\cite{con}), can be defined using either the Dirac dga or the FGR dga.
Hence, a comparison is needed to overcome the difficulty of choice between these two dgas. This is precisely the goal of our investigation in this
article. Main conclusion of this article is ``\textbf{Dirac dga is more informative than that of Fr\"{o}hlich et al.}'' and our task is substantiating
this claim. Precise meaning of ``more informative'' is given through explicit computation of both these dgas for a family of spectral triples. In
the literature, these have been computed in very few cases like noncommutative torus, $SU_q(2)$. This indicates that probably these are difficult to
compute and we had no clue on how to compare them. Recently, authors have identified suitable hypotheses which allow the computation of the Dirac dga
$\,\Omega_D^\bullet$ for a class of spectral triples. This gives the first systematic computation of $\,\Omega_D^\bullet$ for a large family of
spectral triples (\cite{cg}). In this article we compute the FGR dga $\,\widetilde{\Omega}_D^\bullet$ for the same family of spectral triples, and
this leads to a comparison between these two dgas.

To describe our computation in detail we recall the concept of the quantum double suspension (QDS) of a $C^*$-algebra $\mathcal{A}$, denoted by
$\varSigma^2 \mathcal{A}\,$, introduced by Hong-Szymanski in (\cite{hsz}). Later QDS of a spectral triple was introduced by Chakraborty-Sundar
(\cite{csu}). We record here few significance of QDS.

\textbf{Significance of QDS~:}
\begin{itemize}
\item[(a)] Quantum even and odd dimensional spheres are produced by iterating QDS to two points and the circle, respectively (\cite{hsz}).
\item[(b)] Noncommutative analogues of n-dimensional balls are obtained by repeated application of the QDS to the classical low-dimensional spaces (\cite{hsz1}).
\item[(c)] If we have one spectral triple $(\mathcal{A},\mathcal{H},D)$ then iterating QDS we produce many spectral triples. Thus, iterating QDS on
the classical cases of manifolds one produces genuine noncommutative spectral triples. Moreover, finite summability, $\varTheta$-summability,
even-ness all are preserved under the iteration.
\item[(d)] All the torus-equivariant spectral triples on the odd dimensional quantum spheres are obtained by iterating QDS to the spectral triple
$\left(C^\infty(S^1),L^2(S^1),-i\frac{d}{d\theta}\right)$.
\item[(e)] Most importantly, QDS produces a class of examples of regular spectral triples having simple dimension spectrum (\cite{csu}), essential
in the context of local index formula of Connes-Moskovici (\cite{cmo}).
\end{itemize}
This article adds one more significance to the above list namely, QDS provides a comparison between the Dirac dga and the FGR dga and establishes
the Dirac dga as more appropritae generalization of the classical de-Rham dga to the noncommutative set-up. We work here under the following mild
hypotheses on a spectral triple $(\mathcal{A},\mathcal{H},D)~$:
\begin{itemize}
\item[1.] $[D,a]F-F[D,a]$ is a compact operator for all $a\in\mathcal{A}$, where $F$ is the sign of the operator $D$,
\item[2.] $\mathcal{H}^\infty:=\bigcap_{k\geq 1}\mathcal{D}om(D^k)$ is a left $\mathcal{A}$-module, and $[D,\mathcal{A}]\subseteq\mathcal{A}\otimes
\mathcal{E}nd_{\mathcal{A}}(\mathcal{H}^\infty)\subseteq\mathcal{E}nd_{\mathbb{C}}(\mathcal{H}^\infty)$.
\end{itemize}
Notable features of these hypotheses are firstly, the spectral triple associated with a first order differential operator on a manifold will always
satisfy them and secondly, they are stable under the quantum double suspension. The authors have computed $\,\Omega_D^\bullet$ for the quantum
double suspended spectral triple $(\varSigma^2 \mathcal{A},\varSigma^2 \mathcal{H},\varSigma^2 D)$ in (\cite{cg}) under these conditions. It turns
out that the FGR dga becomes almost trivial for $(\varSigma^2 \mathcal{A},\varSigma^2 \mathcal{H},\varSigma^2 D)$ in the sense that it does not
reflect any information about $(\mathcal{A},\mathcal{H},D)$. This phenomenon was observed in (\cite{cp}) for the $SU_q(2)$. Since, the torus
equivariant spectral triples on the odd dimensional quantum spheres are obtained through iterated QDS on the spectral triple $(C^\infty(S^1),
L^2(S^1),-i\frac{d}{d\theta})$, this article also extends earlier work of Chakraborty-Pal (\cite{cp}). This helps us to conclude, in view of
(\cite{cg}), that the Dirac dga is more informative than the FGR dga.

Organization of this paper is as follows. In Section ($2$) we discuss Dirac dga $\,\Omega_D^\bullet$, the quantum double suspension and obtain few
results. Section ($3$) mainly deals with the computation of the FGR dga $\,\widetilde \Omega_{\varSigma^2D}^\bullet(\varSigma^2\mathcal{A})$ for
QDS, which finally leads us to the comparison between Connes' and FGR dga.
\bigskip


\section{Dirac DGA and The Quantum Double Suspension}

In this section we recall the definition of Dirac dga $\,\Omega_D^\bullet\,$ from (\cite{con}), and the quantum double suspension from
(\cite{hsz},\cite{csu}).

\begin{definition}
A {\it spectral triple} $(\mathcal{A},\mathcal{H},D)$ over an involutive associative algebra $\mathcal{A}$ consists of the following things $:$
\begin{enumerate}
\item a $\,\star\,$-representation $\pi$ of $\mathcal{A}$ on a Hilbert space $\mathcal{H}$,
\item an unbounded selfadjoint operator $D$ acting on $\mathcal{H}$,
\item $D$ has compact resolvent and $[D,a]$ extends to a bounded operator on $\mathcal{H}$ for every $\,a \in \mathcal{A}$.
\end{enumerate}
\end{definition}

We will assume that $\mathcal{A}$ is unital and $\pi$ is a unital representation. If $|D|^{-p}$ is in the ideal of Dixmier traceable operators
$\mathcal{L}^{(1,\infty)}$ then we say that the spectral triple is $\,p$-summable. In literature, this is sometimes denoted by $\,p^+$-summable,
$(p,\infty)$-summable etc. Moreover, if there is a $\mathbb{Z}_2$-grading $\gamma\in\mathcal{B}(\mathcal{H})$ such that $\gamma$ commutes with
every element of $\mathcal{A}$ and anticommutes with $D$ then the spectral triple $(\mathcal{A},\mathcal{H},D,\gamma)$ is said to be an
{\it even spectral triple}. Associated to every spectral triple we have the following differential graded algebra (dga).

\begin{definition}[\cite{con},\cite{cg}]\label{our defn of Connes}
Let $(\mathcal{A},\mathcal{H},D)$ be a spectral triple and $\,\Omega^\bullet(\mathcal{A})=\bigoplus_{k=0}^\infty\Omega^k(\mathcal{A})\,$ be the
reduced universal differential graded algebra over $\mathcal{A}$. Here, $\,\Omega^k(\mathcal{A}):=span\{a_0da_1\ldots da_k\colon a_i\in\mathcal{A},
i=1,\ldots,k\},\,d$ being the universal differential. With the convention $(da)^*=-da^*$, we get a $\star\,$-representation $\pi$ of $\,\Omega^\bullet
(\mathcal{A})$ on $\mathcal{Q}(\mathcal{H}):=\mathcal{B}(\mathcal{H})/\mathcal{K}(\mathcal{H})$, given by
\begin{center}
$\pi(a_0da_1\ldots da_k) := a_0[D,a_1]\ldots [D,a_k]+\mathcal{K}(\mathcal{H})\, \, ;\, \, a_j \in \mathcal{A}\,$.
\end{center}
Let $\,J_0^{(k)}=\{\omega\in\Omega^k:\pi(\omega)=0\}$ and $\,J^\prime=\bigoplus J_0^{(k)}$. Since $\,J^\prime$ fails to be a differential ideal in
$\,\Omega^\bullet$ consider $J^\bullet=\bigoplus J^{(k)}$, where $\,J^{(k)}=J_0^{(k)}+dJ_0^{(k-1)}$. Then $\,J^\bullet$ becomes a differential graded
two-sided ideal in $\,\Omega^\bullet$ and hence, the quotient $\,\Omega_D^\bullet=\Omega^\bullet/J^\bullet$ becomes a differential graded algebra,
called the Connes' calculus or the Dirac dga.
\end{definition}

The representation $\pi$ gives the following isomorphism
\begin{eqnarray}\label{main iso}
\Omega_D^k \cong \pi(\Omega^k)/\pi(dJ_0^{k-1})\,,\quad\forall\,k\geq 1.
\end{eqnarray}
The differential $d$ on $\Omega^\bullet(\mathcal{A})$ induces a differential, denoted again by $d$, on the complex $\Omega^\bullet_D(\mathcal{A})$
so that we get a chain complex $(\,\Omega^\bullet_D(\mathcal{A}),d\,)$ and a chain map $\,\pi_D:\Omega^\bullet(\mathcal{A})\longrightarrow
\Omega^\bullet_D(\mathcal{A})$ such that the following diagram 
\begin{center}
\begin{tikzpicture}[node distance=3cm,auto]
\node (Up)[label=above:$\pi_D$]{};
\node (A)[node distance=1.5cm,left of=Up]{$\Omega^\bullet(\mathcal{A})$};
\node (B)[node distance=1.5cm,right of=Up]{$\Omega^\bullet_D(\mathcal{A})$};
\node (Down)[node distance=1.5cm,below of=Up, label=below:$\pi_D$]{};
\node(C)[node distance=1.5cm,left of=Down]{$\Omega^{\bullet+1}(\mathcal{A})$};
\node(D)[node distance=1.5cm,right of=Down]{$\Omega^{\bullet+1}_D(\mathcal{A})$};
\draw[->](A) to (B);
\draw[->](C) to (D);
\draw[->](B)to node{{ $d$}}(D);
\draw[->](A)to node[swap]{{ $d$}}(C);
\end{tikzpicture} 
\end{center}
commutes. Note that $\,\Omega_D^\bullet(\mathcal{A})$ can be defined for non-unital algebra $\mathcal{A}$ as well as prescribed in (\cite{cg},
after Remark $[2.3]$).

\begin{lemma}\label{filtered algebra}
If there is a decreasing filtration $$\mathcal{A}=\mathcal{A}_0\supseteq\mathcal{A}_{-1}\supseteq\ldots\ldots\supseteq\{0\}$$ of subspaces of
$\mathcal{A}$ then $\,\Omega_D^\bullet(\mathcal{A})$ becomes a filtered algebra.
\end{lemma}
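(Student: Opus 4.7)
The plan is to lift the given filtration on $\mathcal{A}$ to the universal dga $\Omega^\bullet(\mathcal{A})$ first, and then push it forward along the quotient map $\pi_D : \Omega^\bullet(\mathcal{A}) \twoheadlongrightarrow \Omega_D^\bullet(\mathcal{A})$. For $n \geq 0$ I would set
\[
F^{-n}\Omega^k(\mathcal{A}) := \mathrm{span}\bigl\{\,a_0\,da_1\cdots da_k \,:\, a_i \in \mathcal{A}_{-n_i},\; n_0+n_1+\cdots+n_k \geq n\,\bigr\},
\]
and let $F^{-n}\Omega^\bullet(\mathcal{A}) = \bigoplus_k F^{-n}\Omega^k(\mathcal{A})$. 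Because $\mathcal{A}_0 \supseteq \mathcal{A}_{-1}\supseteq \cdots$, the family $\{F^{-n}\Omega^\bullet\}_{n\geq 0}$ is a decreasing filtration of $\Omega^\bullet(\mathcal{A})$ by graded subspaces.

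The main point of the proof is the verification that this filtration is multiplicative, i.e.\ $F^{-m}\Omega^k \cdot F^{-n}\Omega^l \subseteq F^{-(m+n)}\Omega^{k+l}$. Given two monomials in standard form $\omega_1 = a_0\,da_1\cdots da_k$ and $\omega_2 = b_0\,db_1\cdots db_l$, the element $b_0$ sits between two differentials in the product and must be moved into an adjacent slot. This is achieved by iterated use of the Leibniz identity, written as $(dx)\,y = d(xy) - x\,dy$: applying it first to $(da_k)b_0$, then to the $(da_{k-1})a_k$ thrown up in the residual term, and so on, expresses $\omega_1\omega_2$ as an alternating sum of standard monomials in which certain consecutive elements from $\{a_j\}$ and $\{b_j\}$ have been multiplied together inside $\mathcal{A}$. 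This is where the implicit multiplicative compatibility of the filtration on $\mathcal{A}$, namely $\mathcal{A}_{-p}\cdot\mathcal{A}_{-q}\subseteq \mathcal{A}_{-(p+q)}$, is used: each resulting summand then has total filtration degree at least $m+n$, so lies in $F^{-(m+n)}\Omega^{k+l}$.

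With that in hand, the final step is formal. Define
\[
F^{-n}\Omega_D^\bullet(\mathcal{A}) := \pi_D\bigl(F^{-n}\Omega^\bullet(\mathcal{A})\bigr).
\]
Since $\pi_D$ is a surjective graded algebra homomorphism, the family $\{F^{-n}\Omega_D^\bullet\}$ automatically inherits the decreasing and multiplicatively compatible properties from $\{F^{-n}\Omega^\bullet\}$, giving $\Omega_D^\bullet(\mathcal{A})$ the structure of a filtered algebra.

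The only genuine obstacle is the Leibniz bookkeeping in the middle step: signs and ordering are cosmetic, but one must confirm that every standard monomial produced by the successive applications of $(dx)y = d(xy) - x\,dy$ has total filtration degree no smaller than $m+n$. Everything else reduces to the general principle that images of filtered algebras under surjective algebra homomorphisms are filtered.
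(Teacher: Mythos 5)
Your construction is internally coherent, but it builds a genuinely different filtration from the one the paper uses, and it quietly strengthens the hypothesis. The paper's filtration piece at level $n$ is $\mathcal{F}_n=\bigoplus_{k\geq 0}\Omega^k(\mathcal{A}_n)/J^{k,n}$, i.e.\ the image in $\Omega_D^\bullet(\mathcal{A})$ of those monomials $a_0\,da_1\cdots da_k$ \emph{all} of whose entries lie in $\mathcal{A}_n$, with $J^{k,n}=J_0^{k,n}+dJ_0^{k-1,n}$ the Dirac ideal computed relative to $\mathcal{A}_n$; the substance of the paper's proof is the analysis of the connecting maps $\Phi^{k,n}$ whose kernels and images exhibit each level inside the next. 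You instead filter by the \emph{total} filtration degree $\sum_i n_i$ summed over all tensor slots. These do not coincide: a monomial with one entry in $\mathcal{A}_{-1}$ and the rest in $\mathcal{A}_0$ lies in your $F^{-1}\Omega^k$ but need not lie in the paper's $\mathcal{F}_{-1}$. Two consequences follow. First, your Leibniz bookkeeping forces the assumption $\mathcal{A}_{-p}\cdot\mathcal{A}_{-q}\subseteq\mathcal{A}_{-(p+q)}$; taking $p=0$ this makes every $\mathcal{A}_{-q}$ a two-sided ideal of $\mathcal{A}$, which is substantially more than the stated hypothesis, whereas the paper's version only needs each $\mathcal{A}_n$ to be closed under multiplication (so that the Leibniz expansion of $\mathcal{F}_n\cdot\mathcal{F}_n$ stays in $\mathcal{F}_n$). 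Second, and more importantly for the paper's purposes, the associated graded algebra computed immediately afterwards, $\bigoplus_{n\leq 0}\bigoplus_{p\geq 0}\Omega^p(\mathcal{A}_n)/(\Omega^p(\mathcal{A}_{n-1})+J^{p,n})$, is the associated graded of the paper's filtration, not of yours; your filtration would yield a different graded object and would not feed into the later computation for $\varSigma^2\mathcal{A}$. What your route buys is the standard additive convention $F^{-m}\cdot F^{-n}\subseteq F^{-(m+n)}$, under which ``filtered algebra'' and its associated graded have their textbook meanings; what the paper's route buys is applicability under the weaker (implicit) hypothesis that the $\mathcal{A}_n$ are merely subalgebras, together with compatibility with the rest of Section 2. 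If you keep your approach, state the extra multiplicativity assumption explicitly and be aware that the downstream results are phrased for the other filtration.
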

\begin{proof}
Let $\,J_0^{k,n}=ker\left(\pi^k|_{\Omega^k(\mathcal{A}_n)}\right)$. Then $\,J_0^{k,n}\subseteq J_0^{k,n+1}$. If we let $\,J^{k,n}=J_0^{k,n}+
dJ_0^{k-1,n}$ then $\,J^{k,n}\subseteq J^{k,n+1}$. We have $$\Phi^{k,n}:\frac{\Omega^k(\mathcal{A}_n)}{J^{k,n}}\hooklongrightarrow\frac{\Omega^k
(\mathcal{A}_{n+1})}{J^{k,n}}\twoheadlongrightarrow\frac{\Omega^k(\mathcal{A}_{n+1})}{J^{k,n+1}}$$ with
\begin{eqnarray*}
Ker(\Phi^{k,n}) & = & \{\omega\in\Omega^k(\mathcal{A}_n):\omega\in J^{k,n+1}\}\\
& = & J^{k,n+1}\cap\Omega^k(\mathcal{A}_n)/J^{k,n}\,,
\end{eqnarray*}
and
\begin{eqnarray*}
\mathcal{I}m(\Phi^{k,n}) & = & \Omega^k(\mathcal{A}_n)/\Omega^k(\mathcal{A}_n)\cap J^{k,n+1}\,.
\end{eqnarray*}
This gives a filtration on $\,\Omega_D^\bullet(\mathcal{A})$.
\end{proof}

\begin{proposition}\label{associated graded algebra}
The associated graded algebra of the filtered algebra $\,\Omega_D^\bullet(\mathcal{A})$ is given by $$\mathcal{G}=\bigoplus_{n\leq 0}
\bigoplus_{p\geq 0}\frac{\Omega^p(\mathcal{A}_n)}{\Omega^p(\mathcal{A}_{n-1})+J^{p,n}}\,.$$
\end{proposition}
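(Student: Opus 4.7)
The plan is to read off the associated graded algebra directly from the description of the filtration constructed in Lemma~\ref{filtered algebra}. For each $p\geq 0$ and $n\leq 0$, take the $n$-th piece of the filtration on $\Omega_D^p(\mathcal{A})$ to be $F^{p,n}:=\Omega^p(\mathcal{A}_n)/J^{p,n}$, and note that the lemma identifies the image of the transition map $\Phi^{p,n-1}\colon F^{p,n-1}\to F^{p,n}$ as
$$\mathcal{I}m(\Phi^{p,n-1})\;=\;\frac{\Omega^p(\mathcal{A}_{n-1})+J^{p,n}}{J^{p,n}}\;\subseteq\;F^{p,n}.$$
This image is, by construction, the copy of the previous filtration level that sits inside the current one.

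The associated graded piece at bidegree $(p,n)$ is therefore $F^{p,n}/\mathcal{I}m(\Phi^{p,n-1})$, and the third isomorphism theorem gives
$$\frac{F^{p,n}}{\mathcal{I}m(\Phi^{p,n-1})}\;\cong\;\frac{\Omega^p(\mathcal{A}_n)/J^{p,n}}{\bigl(\Omega^p(\mathcal{A}_{n-1})+J^{p,n}\bigr)/J^{p,n}}\;\cong\;\frac{\Omega^p(\mathcal{A}_n)}{\Omega^p(\mathcal{A}_{n-1})+J^{p,n}}.$$
Summing over $n\leq 0$ and $p\geq 0$ produces the displayed formula for $\mathcal{G}$.

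The main point needing genuine verification is the compatibility of the filtration with the algebra structure on $\Omega_D^\bullet(\mathcal{A})$: one must check that $\{F^{p,n}\}$ is multiplicative, $F^{p,m}\cdot F^{q,n}\subseteq F^{p+q,\min(m,n)}$, and that the differential preserves it, $d(F^{p,n})\subseteq F^{p+1,n}$, so that $\mathcal{G}$ inherits a differential graded algebra structure. Both properties reduce to the analogous properties of the chain $\mathcal{A}_0\supseteq\mathcal{A}_{-1}\supseteq\cdots$ (which one implicitly assumes is multiplicative as a filtration of $\mathcal{A}$) together with the fact that $J^\bullet$ is a two-sided differential graded ideal in $\Omega^\bullet(\mathcal{A})$. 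I expect this multiplicativity bookkeeping to be the only non-formal step; the computation of the graded quotient itself is immediate from the kernel/image description of $\Phi^{p,n-1}$ already obtained in Lemma~\ref{filtered algebra}.
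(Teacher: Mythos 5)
Your proposal is correct and follows essentially the same route as the paper: both identify the $(p,n)$ graded piece as $\bigl(\Omega^p(\mathcal{A}_n)/J^{p,n}\bigr)\big/\mathcal{I}m(\Phi^{p,n-1})$ and then use the image description from Lemma~\ref{filtered algebra} together with the isomorphism theorems to obtain $\Omega^p(\mathcal{A}_n)/\bigl(\Omega^p(\mathcal{A}_{n-1})+J^{p,n}\bigr)$. Your closing remark about checking multiplicativity of the filtration is a reasonable extra caution, but it is not part of the paper's argument and does not change the computation.
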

\begin{proof}
By Lemma (\ref{filtered algebra}), the filtration on $\,\Omega_D^\bullet(\mathcal{A})$ is given by $\,\mathcal{F}_n=\bigoplus_{k\geq 0}\Omega^k
(\mathcal{A}_n)/J^{k,n}$. Hence, the associated graded algebra is given by $\,\mathcal{G}=\bigoplus_{n\leq 0}\mathcal{G}_n$ where,
\begin{eqnarray*}
\mathcal{G}_n & = & \mathcal{F}_n/\mathcal{F}_{n-1}\\
& = & \frac{\bigoplus_{p\geq 0}\Omega^p(\mathcal{A}_n)/J^{p,n}}{\bigoplus_{q\geq 0}\Omega^q(\mathcal{A}_{n-1})/J^{q,n-1}}\\
& = & \bigoplus_{p\geq 0}\frac{\Omega^p(\mathcal{A}_n)/J^{p,n}}{\Omega^p(\mathcal{A}_{n-1})/J^{p,n-1}}\\
& = & \bigoplus_{p\geq 0}\frac{\Omega^p(\mathcal{A}_n)/J^{p,n}}{\mathcal{I}m(\Phi^{p,n-1})}\\
& = & \bigoplus_{p\geq 0}\frac{\Omega^p(\mathcal{A}_n)/J^{p,n}}{\Omega^p(\mathcal{A}_{n-1})/\Omega^p(\mathcal{A}_{n-1})\cap J^{p,n}}\\
& = & \bigoplus_{p\geq 0}\frac{\Omega^p(\mathcal{A}_n)}{\Omega^p(\mathcal{A}_{n-1})+J^{p,n}}
\end{eqnarray*}
\end{proof}

Now we define the quantum double suspension (QDS) of $C^*$-algebras and spectral triples.
\medskip

\textbf{Notation:~}
\begin{enumerate}
\item We denote by `$l$' the left shift operator on $\ell^2(\mathbb{N})$, defined on the standard orthonormal basis $(e_n)$ by $\,l(e_n) = e_{n-1}$,
$l(e_0) = 0$.
\item `$N$' be the number operator on $\ell^2(\mathbb{N})$ defined by $N(e_n) = ne_n$.
\item `$u$' denotes the rank one projection $|e_0\rangle \langle e_0| := I - l^*l\,$.
\item $\mathcal{K}$ denotes the space of compact operators on $\ell^2(\mathbb{N})$.
\end{enumerate}

\begin{definition}[\cite{hsz}]
Let $\mathcal{A}$ be a unital $C^*$-algebra. The quantum double suspension of $\mathcal{A}$, denoted by $\varSigma^2{\mathcal{A}}$, is the
$C^*$-algebra generated by $a\otimes u$ and $1\otimes l$ in $\mathcal{A}\otimes \mathscr{T}$, where $\mathscr{T}$ is the Toeplitz algebra.
\end{definition}

There is a symbol map $\sigma:\mathscr{T}\longrightarrow C(S^1)$ which sends $l$ to the standard unitary generator $\,z$ of $C(S^1)$ and one
gets the following short exact sequence
\begin{center}
$0\longrightarrow\mathcal{K}\longrightarrow\mathscr{T}\stackrel{\sigma}\longrightarrow C(S^1)\longrightarrow 0\,$.
\end{center}
If $\rho$ denotes the restriction of $1\otimes\sigma$ to $\varSigma^2\mathcal{A}$ then one has the following short exact sequence
\begin{center}
$0\longrightarrow\mathcal{A}\otimes\mathcal{K}\longrightarrow\varSigma^2\mathcal{A}\stackrel{\rho}\longrightarrow C(S^1)\longrightarrow 0\,$.
\end{center}
There is a $\mathbb{C}\,$-linear splitting map $\sigma^\prime$ from $C(S^1)$ to $\varSigma^2\mathcal{A}$ which sends the standard unitary generator
$z$ of $C(S^1)$ to $1\otimes l$, and yields the following $\mathbb{C}$-vector spaces (not as algebras) isomorphism $:$
\begin{center}
$\varSigma^2\mathcal{A}\cong\left(\mathcal{A}\otimes\mathcal{K}\right)\bigoplus C(S^1)\,.$
\end{center}
Notice that $\sigma^\prime$ is injective since it has a left inverse $\,\rho\,$ and hence, any $f\in C(S^1)$ can be identified with $1\otimes
\sigma^\prime(f)\in\varSigma^2\mathcal{A}$. For $f=\sum_n\lambda_nz^n\in C(S^1)$, we write $\,\sigma^{\prime}(f):=\sum_{n\geq 0}\lambda_nl^n+
\sum_{n>0}\lambda_{-n}l^{*n}$. Now let $\mathcal{A}$ be a dense $\star$-subalgebra of a $C^*$-algebra $\mathbb{A}$. Define
\begin{center}
$\varSigma^2_{alg}\mathcal{A}:=span\{a\otimes T,1\otimes l^m,1\otimes(l^*)^n\colon a\in\mathcal{A},T\in\mathbb{S}(\ell^2(\mathbb{N})),m,n\geq 0\}$
\end{center}
where, $\,\mathbb{S}(\ell^2(\mathbb{N})):=\{T=(\alpha_{ij})\colon\sum_{i,j}(1+i+j)^k|\alpha_{ij}|<\infty\,\,\forall\,k\geq 0\}$ is the space of
Schwartz class operators on $\ell^2(\mathbb{N})$. Clearly, $\,\varSigma^2_{alg}\mathcal{A}$ is a dense subalgebra of $\,\varSigma^2\mathbb{A}$ and
we have the following $\mathbb{C}$-vector spaces (not as algebras) isomorphism at the subalgebra level $:$
\begin{center}
$\varSigma^2_{alg}\mathcal{A}\cong\left(\mathcal{A}\otimes\mathbb{S}(\ell^2(\mathbb{N}))\right)\bigoplus\mathbb{C}[z,z^{-1}]\,.$
\end{center}

\begin{definition}[\cite{csu}]
For any spectral triple $(\mathcal{A},\mathcal{H},D),\,(\varSigma^2_{alg}\mathcal{A},\varSigma^2 \mathcal{H}:=\mathcal{H}\otimes \ell^2(\mathbb{N}),
\varSigma^2D:=D\otimes I+F\otimes N)$ becomes a spectral triple, where $F$ is the sign of the operator $D$ and $N$ is the number operator on
$\ell^2(\mathbb{N})$. This is called the quantum double suspension of the spectral triple $(\mathcal{A},\mathcal{H},D)$.
\end{definition}

It is easy to see that if $(\mathcal{A},\mathcal{H},D)$ is $p$-summable then $(\varSigma^2_{alg}\mathcal{A},\varSigma^2 \mathcal{H},\varSigma^2 D)$
is a $(p+1)$-summable spectral triple. Notice that for any $f\in\mathbb{C}[z,z^{-1}]\,$ we have $\,[\varSigma^2 D,1\otimes\sigma^\prime(f)]=F\otimes
[N,f]$. The finite subalgebra $(\varSigma^2_{alg}\mathcal{A})_{fin}$ is generated by $a\otimes T$ and $\sum_{0\leq n<\infty}\lambda_nl^n+\sum_{0<
n<\infty}\lambda_{-n}l^{*n}$, where $a\in\mathcal{A}$ and $T\in\mathcal{B}\left(\ell^2(\mathbb{N})\right)$ is a finitely supported matrix.

\begin{remark}\rm
In $($\cite{con}$)$, Connes represented $\,\Omega^\bullet(\mathcal{A})$ on $\mathcal{B}(\mathcal{H})$ instead on $\mathcal{Q}(\mathcal{H})$. But
the explicit computation of $\,\Omega_{\varSigma^2D}^\bullet((\varSigma^2_{alg}\mathcal{A})_{fin})$ is very difficult, even in the particular cases.
In $($\cite{cg}$)$ authors have computed $\,\Omega_{\varSigma^2D}^\bullet((\varSigma^2_{alg}\mathcal{A})_{fin})$ following the prescription given in
Definition $($\ref{our defn of Connes}$)$. Justification for this is also discussed in $($\cite{cg}$)$.
\end{remark}

The computation of $\,\Omega_{\varSigma^2D}^\bullet((\varSigma^2_{alg}\mathcal{A})_{fin})$ has been done in (\cite{cg}) under the following
conditions on spectral triples $(\mathcal{A},\mathcal{H},D)$.
\medskip

\textbf{Conditions}~:\\
$\hspace*{.7cm}(A)\,\,[D,a]F-F[D,a]$ is a compact operator for all $a\in\mathcal{A}\,$, where $F=sign(D)$.\\
$\hspace*{.7cm}(B)\,\,\mathcal{H}^\infty:=\bigcap_{k\geq 1}\mathcal{D}om(D^k)$ is a left $\mathcal{A}$-module and $\,[D,\mathcal{A}]\subseteq
\mathcal{A}\otimes\mathcal{E}nd_{\mathcal{A}}(\mathcal{H}^\infty)\subseteq\mathcal{E}nd_{\mathbb{C}}(\mathcal{H}^\infty)\,$.
\medskip

Notable features of these conditions are given by the following Proposition.

\begin{proposition}[\cite{cg}]
These conditions are valid for the classical case where $\mathcal{A}=C^\infty(\mathbb{M})$ and $D$ is a first order differential operator. Moreover,
if a spectral triple $(\mathcal{A},\mathcal{H},D)$ satisfies these conditions then the quantum double suspended spectral triple $(\varSigma^2_{alg}
\mathcal{A},\varSigma^2\mathcal{H},\varSigma^2D)$ also satisfies them.
\end{proposition}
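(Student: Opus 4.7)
The statement consists of two independent claims: (i) the classical spectral triple $(C^\infty(\mathbb{M}), L^2(\mathbb{M};S), D)$ with $D$ a first-order differential operator satisfies (A) and (B); and (ii) QDS inherits both conditions. For (i), condition (A) follows from the pseudodifferential symbol calculus: $[D,a]$ is a zeroth-order operator (multiplication by the principal symbol of $D$ contracted with $da$), $F = D|D|^{-1}$ is a classical pseudodifferential operator of order $0$, so $[[D,a], F]$ is of order $-1$ and hence compact on $L^2$. For (B), $\mathcal{H}^\infty$ is the space of smooth sections of the spinor bundle, a $C^\infty(\mathbb{M})$-module under pointwise multiplication, and in a local coordinate frame $[D,a] = \sum_i \partial_i(a)\,\sigma_i$ with $\partial_i(a)\in\mathcal{A}$ and $\sigma_i$ (Clifford multiplication by $dx^i$) a $C^\infty(\mathbb{M})$-linear bundle endomorphism, yielding the required decomposition.

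\textbf{QDS, condition (A).} The key preparatory step is the identification of $\varSigma^2 F = \mathrm{sign}(\varSigma^2 D)$. Since $D$ and $F$ commute and $F = \pm 1$ matches the sign of $D$ on $(\ker D)^\perp$, on each subspace $\mathcal{H}\otimes\mathbb{C}e_n$ the operator $\varSigma^2 D = D + nF$ has sign $F$, and hence $\varSigma^2 F = F\otimes I$ modulo a finite-rank correction. Now (A) is checked on the generators of $\varSigma^2_{alg}\mathcal{A}$: for $x = 1\otimes l^k$, $[\varSigma^2 D, x] = -kF\otimes l^k$ commutes with $F\otimes I$ so (A) is immediate; for $x = a\otimes T$ with $T\in\mathbb{S}(\ell^2(\mathbb{N}))$,
\[ [\varSigma^2 D, a\otimes T] = [D,a]\otimes T + [F,a]\otimes NT + aF\otimes [N,T]. \]
Bracketing this with $F\otimes I$ produces three terms whose first factors are $[[D,a],F]$, $[[F,a],F]$, and $[F,aF]$; the first is compact by (A) on the original triple, the other two are bounded, and the second tensor factors $T$, $NT$, $[N,T]$ are all Schwartz-class hence compact, so the whole sum is compact.

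\textbf{QDS, condition (B), and main obstacle.} Using commutativity of $D\otimes I$ and $F\otimes N$ on smooth vectors, one identifies $(\varSigma^2\mathcal{H})^\infty = \mathcal{H}^\infty\,\widehat{\otimes}\,\mathcal{S}(\mathbb{N})$, where $\mathcal{S}(\mathbb{N})$ is the Schwartz space of rapidly decaying sequences. This is a left $\varSigma^2_{alg}\mathcal{A}$-module because $\mathcal{A}$ preserves $\mathcal{H}^\infty$ by (B), Schwartz-class operators send $\ell^2(\mathbb{N})$ into $\mathcal{S}(\mathbb{N})$, and the shifts preserve $\mathcal{S}(\mathbb{N})$. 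For the inclusion $[\varSigma^2 D,\varSigma^2_{alg}\mathcal{A}]\subseteq \varSigma^2_{alg}\mathcal{A}\otimes\mathcal{E}nd_{\varSigma^2_{alg}\mathcal{A}}((\varSigma^2\mathcal{H})^\infty)$, the term $[D,a]\otimes T$ is decomposed using $[D,a] = \sum a_i E_i$ from (B) as $\sum(a_i\otimes T)(E_i\otimes I)$, with $E_i\otimes I$ checkably $\varSigma^2_{alg}\mathcal{A}$-linear; the generator $1\otimes l^k$ is handled analogously since its commutator is $-kF\otimes l^k$. The main obstacle is the middle term $[F,a]\otimes NT$: since $[F,a]$ admits no decomposition in $\mathcal{A}\cdot\mathcal{E}nd_\mathcal{A}(\mathcal{H}^\infty)$ in general, the only route is to exploit the Schwartz cofactor $NT\in\mathbb{S}(\ell^2(\mathbb{N}))$ to absorb $[F,a]$ into a $\varSigma^2_{alg}\mathcal{A}$-linear endomorphism built from the rapid-decay structure of the second tensor slot; this is the step requiring the most care and is the genuinely QDS-specific argument, as it uses features of $\mathbb{S}(\ell^2(\mathbb{N}))$ that have no counterpart in the original triple.
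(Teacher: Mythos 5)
First, a point of reference: the paper does not actually prove this Proposition --- it is imported verbatim from \cite{cg} --- so there is no in-text argument to compare you against; your attempt has to stand on its own. Your classical case (i) is fine: the symbol-calculus argument for (A) and the local-frame decomposition $[D,a]=\sum_i\partial_i(a)\sigma_i$ for (B) are both correct and standard. The identification $\mathrm{sign}(\varSigma^2D)=F\otimes I$ up to finite rank, and the generator computation $[\varSigma^2D,a\otimes T]=[D,a]\otimes T+[F,a]\otimes NT+aF\otimes[N,T]$, are also correct. The problems are in how you close the two QDS verifications.

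For condition (A) your final inference is wrong as stated: you conclude compactness of $[[F,a],F]\otimes NT$ and $[F,aF]\otimes[N,T]$ from ``bounded first factor, compact second factor.'' But $B\otimes K$ is \emph{not} compact when $B$ is bounded non-compact and $K\neq0$ is compact (take unit vectors $\xi_n\to0$ weakly with $\|B\xi_n\|$ bounded below and any $\eta$ with $K\eta\neq0$; then $\xi_n\otimes\eta\to0$ weakly while $\|(B\otimes K)(\xi_n\otimes\eta)\|$ does not tend to $0$). The step can be rescued, but only by observing that both first factors are themselves compact: a direct computation gives $[[F,a],F]=-2F[F,a]$ and $[F,aF]=-F[F,a]$, and $[F,a]$ is compact for every spectral triple (the standard fact that $(\mathcal{H},F)$ is a Fredholm module over $\mathcal{A}$). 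That fact must be invoked explicitly; it is also needed when you pass from generators to general elements, since the Leibniz expansion of $[[\varSigma^2D,xy],F\otimes I]$ produces cross terms $[\varSigma^2D,x]\,[y,F\otimes I]$ whose compactness again rests on $[F,a]$ being compact.

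For condition (B) the proof is simply not finished, and you say so: the term $[F,a]\otimes NT$ is declared ``the main obstacle'' and dispatched with a plan rather than an argument. This is the mathematical crux of the whole second claim, and the strategy you sketch --- absorbing $[F,a]$ using the rapid decay of the second tensor slot --- runs into a concrete obstruction. The $\varSigma^2_{alg}\mathcal{A}$-linear endomorphisms of $(\varSigma^2\mathcal{H})^\infty$ are essentially $\mathcal{E}nd_{\mathcal{A}}(\mathcal{H}^\infty)\otimes\mathbb{C}I$: commuting with $1\otimes e_{ij}$ for all $i,j$ forces the shape $Y\otimes I$, and commuting with $a\otimes e_{00}$ then forces $Y$ to be $\mathcal{A}$-linear. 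Consequently an element of $\varSigma^2_{alg}\mathcal{A}\cdot\mathcal{E}nd_{\varSigma^2_{alg}\mathcal{A}}((\varSigma^2\mathcal{H})^\infty)$ supported on $e_{ij}$ in the second slot has first factor in $\mathcal{A}\cdot\mathcal{E}nd_{\mathcal{A}}(\mathcal{H}^\infty)$, so no amount of Schwartz decay in the second slot can absorb $[F,a]$; whatever argument works must say something about $[F,a]$ (and about $F$ itself) in the first slot. The same issue is hidden in your phrase ``handled analogously'' for the generator $1\otimes l^k$: its commutator $-kF\otimes l^k$ has first factor $F$, which does not lie in $\mathcal{A}\cdot\mathcal{E}nd_{\mathcal{A}}(\mathcal{H}^\infty)$ even for the circle (there $F$ is the Hilbert transform). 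So as written the second half of the Proposition --- precisely the half this paper relies on --- is not established; you need either the actual decomposition used in \cite{cg} or a reformulation of condition (B) for the suspended triple that accommodates the $F$-terms.
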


\textbf{Notation}~: \begin{enumerate}
                     \item In this article we will work with $(\varSigma^2_{alg}\mathcal{A})_{fin}$ and denote it by $\,\varSigma^2 \mathcal{A}\,$
                     for notational brevity.
                     \item For all $f\in\mathbb{C}[z,z^{-1}],$ we denote $\,[N,f]$ by $\,f^\prime\,$ for notational brevity.
                     \item `$\mathcal{S}$' denotes the space of finitely supported matrices in $\mathcal{B}(\ell^2(\mathbb{N}))\, $.
                     \item $(e_{ij})$ will denote infinite matrix with $1$ at the $ij$-th place and zero elsewhere. We call it elementary matrix.
                    \end{enumerate}
\medskip

The notion of unitary equivalence of spectral triples forms a category of spectral triples. That is, we have the following.
\begin{definition}\label{category}
The objects of the category $\mathcal{S}pec$ are spectral triples $(\mathcal{A},\mathcal{H},D)$. A morphism between two such objects $(\mathcal{A}_i,
\mathcal{H}_i,D_i),i=1,2,$ is a tuple $(\phi,\Phi)$, where $\phi:\mathcal{A}_1\rightarrow\mathcal{A}_2$ is a unital algebra morphism and $\Phi:
\mathcal{H}_1\rightarrow\mathcal{H}_2$ is a unitary which intertwines the algebra representations and the Dirac operators $D_1,D_2$. 
\end{definition}

\begin{proposition}\label{Connes is functor}
The association $\mathcal{F}:(\mathcal{A},\mathcal{H},D)\longmapsto\Omega_D^\bullet(\mathcal{A})$ gives a covariant functor from $\mathcal{S}pec$ to
$DGA$, the category of differential graded algebras over $\mathbb{C}$.
\end{proposition}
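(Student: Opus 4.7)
The plan is to construct, for each morphism $(\phi,\Phi)$ in $\mathcal{S}pec$, a morphism of dgas between the corresponding Dirac dgas and then to verify functoriality is essentially automatic from the universal construction.

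First I would lift $\phi$ to the universal level. By the universal property of $\Omega^\bullet(\mathcal{A}_1)$, the unital algebra map $\phi:\mathcal{A}_1\to\mathcal{A}_2$ extends uniquely to a morphism of differential graded algebras
\[
\widetilde{\phi}:\Omega^\bullet(\mathcal{A}_1)\longrightarrow\Omega^\bullet(\mathcal{A}_2),\qquad a_0\,da_1\cdots da_k\longmapsto \phi(a_0)\,d\phi(a_1)\cdots d\phi(a_k),
\]
which commutes with $d$ and respects the involution $(da)^\ast=-da^\ast$.

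Next I would push this down to the representations on $\mathcal{Q}(\mathcal{H}_i)$. Writing $\pi_i$ for the representation of $\Omega^\bullet(\mathcal{A}_i)$ on $\mathcal{B}(\mathcal{H}_i)/\mathcal{K}(\mathcal{H}_i)$ from Definition \ref{our defn of Connes}, the hypothesis that $\Phi$ intertwines the $\ast$-representations and the Dirac operators gives $\Phi\,a\,\Phi^\ast=\phi(a)$ and $\Phi D_1\Phi^\ast=D_2$, hence $\Phi[D_1,a]\Phi^\ast=[D_2,\phi(a)]$ in $\mathcal{B}(\mathcal{H}_2)$. Since $\mathrm{Ad}_\Phi$ is a $\ast$-isomorphism $\mathcal{B}(\mathcal{H}_1)\to\mathcal{B}(\mathcal{H}_2)$ sending compacts to compacts, it descends to a $\ast$-isomorphism $\overline{\mathrm{Ad}_\Phi}:\mathcal{Q}(\mathcal{H}_1)\to\mathcal{Q}(\mathcal{H}_2)$ and satisfies
\[
\overline{\mathrm{Ad}_\Phi}\circ\pi_1 \;=\; \pi_2\circ\widetilde{\phi}.
\]

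From this intertwining relation it follows that $\widetilde{\phi}(J_0^{(k)}(\mathcal{A}_1))\subseteq J_0^{(k)}(\mathcal{A}_2)$ for every $k$. Because $\widetilde{\phi}$ commutes with $d$, it also sends $dJ_0^{(k-1)}(\mathcal{A}_1)$ into $dJ_0^{(k-1)}(\mathcal{A}_2)$, so the graded two-sided differential ideal $J^\bullet$ of $\mathcal{A}_1$ maps into that of $\mathcal{A}_2$. Consequently $\widetilde{\phi}$ descends to a morphism of differential graded $\ast$-algebras
\[
\mathcal{F}(\phi,\Phi)\colon \Omega_{D_1}^\bullet(\mathcal{A}_1)\longrightarrow\Omega_{D_2}^\bullet(\mathcal{A}_2),
\]
which we take as the image of the morphism $(\phi,\Phi)$ under $\mathcal{F}$.

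Finally, functoriality reduces to two easy checks on generators $a_0\,da_1\cdots da_k$: the identity morphism $(\mathrm{id},\mathrm{id})$ lifts to the identity on $\Omega^\bullet$ and hence on the quotient, and given a composable pair of morphisms $(\phi_1,\Phi_1),(\phi_2,\Phi_2)$, the uniqueness part of the universal property forces $\widetilde{\phi_2\circ\phi_1}=\widetilde{\phi_2}\circ\widetilde{\phi_1}$, which passes to the quotient. The only real point requiring care is the verification that $\widetilde{\phi}$ preserves the ideal $J^\bullet$; this is where the full strength of the morphism in $\mathcal{S}pec$ (both the intertwining of the representations and the intertwining of the Dirac operators) is used, and it is the step I expect to be the main, though mild, obstacle.
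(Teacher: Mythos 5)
Your proposal is correct and follows essentially the same route as the paper: both hinge on the intertwining relation $\Phi\,\pi_1(\omega)=\pi_2(\widetilde{\phi}(\omega))\,\Phi$ together with the invertibility (in fact surjectivity suffices) of $\Phi$ to conclude that $\pi_1(\omega)=0$ forces $\pi_2(\widetilde{\phi}(\omega))=0$, hence that $J_0^{(k)}$ and $dJ_0^{(k-1)}$ are preserved and the map descends to the quotient. The only cosmetic difference is that you package this as $\overline{\mathrm{Ad}_\Phi}\circ\pi_1=\pi_2\circ\widetilde{\phi}$ at the level of the Calkin algebras and work with the ideal $J^\bullet$ upstairs, while the paper argues elementwise via the isomorphism $\Omega_D^k\cong\pi(\Omega^k)/\pi(dJ_0^{k-1})$; also note that since $\phi$ is only assumed to be a unital algebra morphism, the induced map need not respect the $\star$-structure, but this is irrelevant to functoriality into $DGA$.
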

\begin{proof}
Consider two objects $(\mathcal{A}_1,\mathcal{H}_1,D_1),(\mathcal{A}_2,\mathcal{H}_2,D_2)\in \mathcal{O}b(\mathcal{S}pec)$ and suppose there is a
morphism $(\phi\,,\Phi):(\mathcal{A}_1,\mathcal{H}_1,D_1)\longrightarrow (\mathcal{A}_2,\mathcal{H}_2,D_2)$. Define
\begin{align*}
\Psi:\Omega_{D_1}^\bullet(\mathcal{A}_1) &\longrightarrow \Omega_{D_2}^\bullet(\mathcal{A}_2)\\
\left[\sum a_0\prod_{i=1}^n[D_1,a_i]\right] &\longmapsto\left[\sum \phi(a_0)\prod_{i=1}^n[D_2,\phi(a_i)]\right]
\end{align*}
for all $\,a_j\in \mathcal{A}_1,\,n\geq 0\,$. To show $\Psi$ is well-defined we must show that $\Psi(\pi(d_1J_0^m))\subseteq \pi(d_2J_0^m)$
for all $m\geq 1$, where $d_1,d_2$ are the universal differentials for $\Omega^\bullet(\mathcal{A}_1),\Omega^\bullet(\mathcal{A}_2)$ respectively.
Observe that
\begin{eqnarray}\label{intertwining relation}
\Phi\circ \left(\sum a_0\prod_{i=1}^n[D_1,a_i]\right)=\left(\sum \phi(a_0)\prod_{i=1}^n[D_2,\phi(a_i)]\right)\circ \Phi\,.
\end{eqnarray}
Consider an arbitrary element $\,\xi\in\pi(d_1J_0^n)$. By definition, $\,\xi=\sum \prod_{i=0}^n[D_1,a_i]\in \pi(d_1J_0^n)$ such that $\sum a_0
\prod_{i=1}^n[D_1,a_i]=0$. Now, using equation (\ref{intertwining relation}) and $\Phi$ is a unitarity (surjectivity is enough), we have
\begin{center}
$\sum \phi(a_0)\prod_{i=1}^n[D_2,\phi(a_i)]=0.$
\end{center}
This shows well-definedness of $\Psi$. Now it is easy to check that $\Psi$ is a dga morphism.
\end{proof}

\begin{remark}\rm
One can weaken the definition of morphism of spectral triples by demanding the map $\Phi$ to be only linear. This was defined in (\cite{bc}). But
for Proposition (\ref{Connes is functor}) to hold one requires surjectivity of $\Phi$. However, the reason why we have assumed $\Phi$ to be unitary
will be justified in the next section. 
\end{remark}

\begin{lemma}
The quantum double suspension of a spectral triple is a covariant functor $\varSigma^2$ on the category $\,\mathcal{S}pec$.
\end{lemma}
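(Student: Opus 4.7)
The plan is to define $\varSigma^2$ on morphisms of $\mathcal{S}pec$ and then verify the functor axioms. Given a morphism $(\phi, \Phi): (\mathcal{A}_1, \mathcal{H}_1, D_1) \to (\mathcal{A}_2, \mathcal{H}_2, D_2)$, I would take $\varSigma^2\phi$ to be the restriction to $\varSigma^2\mathcal{A}_1$ of the map $\phi \otimes \mathrm{id}_{\mathscr{T}}: \mathcal{A}_1 \otimes \mathscr{T} \to \mathcal{A}_2 \otimes \mathscr{T}$, and $\varSigma^2\Phi := \Phi \otimes I_{\ell^2(\mathbb{N})}$. Both are manifestly well-defined: $\varSigma^2\phi$ is a unital $*$-algebra homomorphism because $\phi \otimes \mathrm{id}_{\mathscr{T}}$ is one and it sends the generators $a \otimes u$ and $1 \otimes l$ of $\varSigma^2 \mathcal{A}_1$ to $\phi(a) \otimes u$ and $1 \otimes l$ inside $\varSigma^2 \mathcal{A}_2$; the unitarity of $\varSigma^2\Phi$ is immediate from that of $\Phi$.

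Next I would verify that $(\varSigma^2\phi, \varSigma^2\Phi)$ is a morphism in $\mathcal{S}pec$. Intertwining of the algebra representations reduces, on elementary tensors $\xi \otimes e_n$ and on generators of the form $a \otimes T$, to the computation
\[
(\varSigma^2\Phi)((a \otimes T)(\xi \otimes e_n)) = \Phi(a\xi) \otimes T e_n = \phi(a)\Phi(\xi) \otimes T e_n = (\phi(a) \otimes T)(\varSigma^2\Phi)(\xi \otimes e_n),
\]
using $\Phi \pi_1(a) = \pi_2(\phi(a)) \Phi$; the intertwining for the Toeplitz generators $1 \otimes l^{\pm n}$ is trivial, since $\varSigma^2\phi$ fixes them and $\varSigma^2\Phi$ commutes with $I \otimes l^{\pm n}$.

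For the Dirac operators I would expand
\[
(\varSigma^2\Phi)(\varSigma^2 D_1) = (\Phi \otimes I)(D_1 \otimes I + F_1 \otimes N) = \Phi D_1 \otimes I + \Phi F_1 \otimes N,
\]
and compare with $(\varSigma^2 D_2)(\varSigma^2\Phi) = D_2 \Phi \otimes I + F_2 \Phi \otimes N$. The first summand matches by the hypothesis $\Phi D_1 = D_2 \Phi$. The delicate step, and in my view the main obstacle, is the companion identity $\Phi F_1 = F_2 \Phi$. This does not follow formally from $\Phi D_1 = D_2 \Phi$; one must invoke the Borel functional calculus: since $\Phi$ is unitary with $\Phi D_1 \Phi^* = D_2$, one has $\Phi f(D_1)\Phi^* = f(D_2)$ for every bounded Borel $f$, in particular for $f = \mathrm{sign}$ (with any fixed convention at $0$, compatible on $\ker D_i$ because $\Phi$ restricts to a unitary $\ker D_1 \to \ker D_2$). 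It is precisely this step that forces $\Phi$ to be a unitary rather than merely linear or surjective.

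Finally, functoriality is formal: $\varSigma^2(\mathrm{id}, \mathrm{id}) = (\mathrm{id}, \mathrm{id})$ is clear, and $\varSigma^2((\phi', \Phi') \circ (\phi, \Phi)) = \varSigma^2(\phi', \Phi') \circ \varSigma^2(\phi, \Phi)$ follows immediately from the elementary identities $(\phi' \circ \phi) \otimes \mathrm{id}_{\mathscr{T}} = (\phi' \otimes \mathrm{id}_{\mathscr{T}})(\phi \otimes \mathrm{id}_{\mathscr{T}})$ and $(\Phi'\Phi) \otimes I = (\Phi' \otimes I)(\Phi \otimes I)$, completing the verification.
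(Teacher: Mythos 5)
Your proof is correct and supplies exactly the verification the paper dismisses with ``Easy to verify'': the definition $\varSigma^2(\phi,\Phi)=(\,(\phi\otimes\mathrm{id})|_{\varSigma^2\mathcal{A}_1},\,\Phi\otimes I\,)$ is the intended one, and you rightly isolate the only nontrivial point, namely that $\Phi F_1=F_2\Phi$ follows from $\Phi D_1\Phi^*=D_2$ via the Borel functional calculus (which is indeed where unitarity of $\Phi$ is used). No gaps.
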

\begin{proof}
Easy to verify.
\end{proof}

\begin{proposition}
The functor $\varSigma^2$ gives an equivalence $\varSigma^2(\mathcal{S}pec)\cong\mathcal{S}pec\,$ of categories, and hence $\varSigma^2$ is not a
constant functor.
\end{proposition}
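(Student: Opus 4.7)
The plan is to show that $\varSigma^2 \colon \mathcal{S}pec \to \varSigma^2(\mathcal{S}pec)$ is an equivalence onto its image, which---since essential surjectivity onto the image is automatic---reduces to proving that $\varSigma^2$ is fully faithful on morphisms; the non-constancy of $\varSigma^2$ will then drop out for free. The first step is to make $\varSigma^2$ on morphisms explicit: given a morphism $(\phi, \Phi) \colon (\mathcal{A}_1, \mathcal{H}_1, D_1) \to (\mathcal{A}_2, \mathcal{H}_2, D_2)$, set $\varSigma^2(\phi, \Phi) = (\varSigma^2 \phi,\, \Phi \otimes I)$, where $\varSigma^2 \phi$ acts as $\phi \otimes \mathrm{id}$ on $\mathcal{A}_1 \otimes \mathcal{S}$ and as the identity on the polynomial part $\mathbb{C}[z,z^{-1}]$. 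The intertwining $(\Phi \otimes I)\,\varSigma^2 D_1 = \varSigma^2 D_2 \,(\Phi \otimes I)$ follows because $\Phi$ intertwines the $D_i$, hence by functional calculus also $F_i = \mathrm{sign}(D_i)$, and $\varSigma^2 D_i = D_i \otimes I + F_i \otimes N$.

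For faithfulness, suppose $\varSigma^2(\phi_1, \Phi_1) = \varSigma^2(\phi_2, \Phi_2)$. Evaluating the algebra component on $a \otimes u$, where $u = |e_0\rangle \langle e_0|$, gives $\phi_1(a) \otimes u = \phi_2(a) \otimes u$ for every $a \in \mathcal{A}_1$, forcing $\phi_1 = \phi_2$. Likewise, applying $\Phi_i \otimes I$ to any vector $\xi \otimes e_0 \in \mathcal{H}_1 \otimes \ell^2(\mathbb{N})$ yields $\Phi_1 \xi = \Phi_2 \xi$, so $\Phi_1 = \Phi_2$. For fullness, under the natural reading of $\varSigma^2(\mathcal{S}pec)$ as the subcategory whose morphism sets are exactly $\{\varSigma^2(f) : f \in \mathrm{Mor}(\mathcal{S}pec)\}$, fullness holds by construction, and combining with faithfulness gives the equivalence.

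Finally, if $\varSigma^2$ were constant then its image category would consist of a single object together with only its identity morphism, and the equivalence just established would force $\mathcal{S}pec$ itself to collapse to this trivial category---contradicting the existence of patently non-isomorphic spectral triples (e.g.\ classical Dirac triples on manifolds of different dimension). The only conceptual care required in the whole argument is to pin down what is meant by $\varSigma^2(\mathcal{S}pec)$ and by $\varSigma^2$ on morphisms; once those choices are unambiguous, every required verification is a short direct check, so I do not expect a genuine technical obstacle.
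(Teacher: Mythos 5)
There is a genuine gap: you have defined the problem away rather than solved it. By declaring $\varSigma^2(\mathcal{S}pec)$ to be the category whose morphism sets are \emph{exactly} $\{\varSigma^2(f)\colon f\in\mathrm{Mor}(\mathcal{S}pec)\}$, fullness becomes a tautology and the ``equivalence'' you obtain carries no information beyond faithfulness. The substantive content of the proposition --- and the reason the paper needs a nontrivial argument --- is that $\varSigma^2(\mathcal{S}pec)$, viewed inside $\mathcal{S}pec$, might a priori contain isomorphisms between suspended objects that are \emph{not} of the form $\varSigma^2(\phi,\Phi)$; in particular two non-isomorphic spectral triples could conceivably have isomorphic suspensions via a unitary $\widetilde{\Phi}\colon\mathcal{H}_1\otimes\ell^2(\mathbb{N})\to\mathcal{H}_2\otimes\ell^2(\mathbb{N})$ that does not respect the tensor decomposition. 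Ruling this out is exactly what the paper does: starting from an \emph{arbitrary} isomorphism $(\phi,\Phi)$ between $(\varSigma^2\mathcal{A}_1,\varSigma^2\mathcal{H}_1,\varSigma^2D_1)$ and $(\varSigma^2\mathcal{A}_2,\varSigma^2\mathcal{H}_2,\varSigma^2D_2)$, it perturbs $N$ to $N+g(N)$ so that the map $(\lambda_n,\,n+g(n))\mapsto\lambda_n+n+g(n)$ on the joint spectrum is injective; this forces any unitary intertwining the suspended Dirac operators to be of the form $\Phi\otimes 1$, whence the algebra isomorphism decomposes as $\phi\otimes 1\oplus 1$ and one concludes $(\mathcal{A}_1,\mathcal{H}_1,D_1)\cong(\mathcal{A}_2,\mathcal{H}_2,D_2)$. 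That spectral argument is the entire point of the proof, and it is absent from your proposal.

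The gap matters downstream as well: the paper uses this proposition to argue that the collapse of $\mathcal{G}\circ\varSigma^2$ to a constant functor is attributable to the FGR construction $\mathcal{G}$ and not to $\varSigma^2$ itself. For that conclusion one needs $\varSigma^2$ to reflect isomorphisms of $\mathcal{S}pec$, not merely to be injective on morphisms into an artificially restricted image category. Your faithfulness check (evaluating on $a\otimes u$ and $\xi\otimes e_0$) and your closing non-constancy observation are fine as far as they go, but they do not substitute for the missing implication $\varSigma^2 X\cong\varSigma^2 Y\Rightarrow X\cong Y$.
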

\begin{proof}
Recall that as a linear space $\varSigma^2\mathcal{A}=\mathcal{A}\otimes\mathcal{S}\bigoplus\mathbb{C}[z,z^{-1}]$, and $\varSigma^2D=D\otimes 1+
F\otimes N$. Suppose $(\phi,\Phi):(\varSigma^2\mathcal{A}_1,\varSigma^2\mathcal{H}_1,\varSigma^2D_1)\rightarrow(\varSigma^2\mathcal{A}_2,
\varSigma^2\mathcal{H}_2,\varSigma^2D_2)$ is an isomorphism in the sense of Definition (\ref{category}). One can replace $N$ by $N+g(N)$ for a
suitable function $g$ such that $(\varSigma^2\mathcal{A},\varSigma^2\mathcal{H},D\otimes I+F\otimes(N+g(N)))$ remains an honest spectral triple,
and computations done in (\cite{cg}) does not get affected. It is possible to choose such a function $g$ so that the following map
\begin{align*}
\sigma(|D_1|)\times\sigma(N+g(N)) &\longrightarrow \mathbb{N}_+\\
(\lambda_n,n+g(n)) &\longmapsto \lambda_n+n+g(n)
\end{align*}
becomes one to one. This is possible since $D_1$ has discrete spectrum. This will imply that any unitary $\widetilde{\Phi}:\mathcal{H}_1\otimes
\ell^2(\mathbb{N})\rightarrow\mathcal{H}_2\otimes\ell^2(\mathbb{N})$ is of the form $\Phi\otimes 1$, where $\Phi:\mathcal{H}_1\rightarrow
\mathcal{H}_2$ is a unitary. This will asure that algebra isomorphism $\widetilde{\phi}:\varSigma^2\mathcal{A}_1\rightarrow\varSigma^2\mathcal{A}_2$
is of the form $\phi\otimes 1\bigoplus 1$, where $\phi:\mathcal{A}_1\rightarrow\mathcal{A}_2$ is an algebra isomorphism. This shows that
\begin{center}
$(\varSigma^2\mathcal{A}_1,\varSigma^2\mathcal{H}_1,\varSigma^2D_1)\cong(\varSigma^2\mathcal{A}_2,\varSigma^2\mathcal{H}_2,\varSigma^2D_2)
\Longrightarrow(\mathcal{A}_1,\mathcal{H}_1,D_1)\cong(\mathcal{A}_2,\mathcal{H}_2,D_2)$.
\end{center}
The other implication `$\Leftarrow$' is obvious.
\end{proof}

Recall Theorem ($3.22$) from (\cite{cg}).
\begin{theorem}[\cite{cg}]\label{final thm}
For the spectral triple $\,\left(\varSigma^2\mathcal{A},\varSigma^2\mathcal{H},\varSigma^2D\right)$, we have
\begin{enumerate}
\item $\Omega_{\varSigma^2 D}^1\left(\varSigma^2 \mathcal{A}\right)\cong\Omega_D^1(\mathcal{A})\otimes\mathcal{S}\bigoplus\varSigma^2\mathcal{A}\,$.
\item $\Omega_{\varSigma^2 D}^n\left(\varSigma^2 \mathcal{A}\right)\cong\Omega_D^n(\mathcal{A})\otimes\mathcal{S}\,$, $\,$ for all $n\geq 2\,$.
\item The differential $\,\,\delta^0:\varSigma^2 \mathcal{A}\longrightarrow\Omega_{\varSigma^2 D}^1\left(\varSigma^2\mathcal{A}\right)\,$ is given by,
\begin{center}
$a\otimes T+f\longmapsto [D,a]\otimes T\bigoplus\left(a\otimes[N,T]+f^\prime\right)$.
\end{center}
\item The differential $\,\,\delta^1:\Omega_{\varSigma^2 D}^1\left(\varSigma^2\mathcal{A}\right)\longrightarrow\Omega_{\varSigma^2 D}^2
\left(\varSigma^2\mathcal{A}\right)\,$ is given by,
\begin{center}
$\delta^1|_{\Omega_D^1(\mathcal{A})\otimes\mathcal{S}}=d^1\otimes 1\,\,$ and $\,\,\delta^1|_{\varSigma^2 \mathcal{A}}=0$.
\end{center}
\item The differential $\,\,\delta^n:\Omega_{\varSigma^2 D}^n\left(\varSigma^2 \mathcal{A}\right)\longrightarrow\Omega_{\varSigma^2 D}^{n+1}
\left(\varSigma^2\mathcal{A}\right)\,$ is given by $\,\delta^n=d^n\otimes 1$ for all $n\geq 2\,$.
\end{enumerate}
Here, $\,d:\Omega_D^\bullet(\mathcal{A})\longrightarrow\Omega_D^{\bullet+1}(\mathcal{A})$ is the differential of the Dirac dga.
\end{theorem}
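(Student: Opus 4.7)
The plan is to exploit the isomorphism (\ref{main iso}) and reduce the computation of $\Omega_{\varSigma^2 D}^k(\varSigma^2\mathcal{A})$ to understanding, in the Calkin algebra $\mathcal{Q}(\varSigma^2\mathcal{H})$, the images $\pi(\Omega^k)$ and the junk $\pi(dJ_0^{k-1})$ for $k\geq 1$. My starting point is to evaluate commutators on the two types of generators of $\varSigma^2\mathcal{A}$. For $f\in\mathbb{C}[z,z^{-1}]$ one has the straightforward identity $[\varSigma^2 D,1\otimes\sigma'(f)]=F\otimes f'$, while for a generator $a\otimes T$ with $T\in\mathcal{S}$ a direct expansion gives
\[
[\varSigma^2 D,a\otimes T]=[D,a]\otimes T+[F,a]\otimes NT+aF\otimes[N,T].
\]
Because $T$ has finite support, $NT$ is finite rank, so the middle term is compact and may be discarded in $\mathcal{Q}(\varSigma^2\mathcal{H})$; this is where condition (A) (and the finiteness of $T$) is used. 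Hence modulo compacts each degree one element of $\pi(\Omega^1(\varSigma^2\mathcal{A}))$ is a sum of summands of the two shapes $[D,a]\otimes T$ and $aF\otimes[N,T]$ (together with the $F\otimes f'$ piece).

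The next step is to organise this output as a direct sum. The first type lies in $\pi(\Omega^1(\mathcal{A}))\otimes\mathcal{S}$, hence after passing to $\Omega_D^1\otimes\mathcal{S}$ via (\ref{main iso}); the second type, being of the form $\eta F\otimes S$ with $\eta\in\varSigma^2\mathcal{A}$ (using that $[N,\mathcal{S}]\subseteq\mathcal{S}$ and $\mathbb{C}[z,z^{-1}]'\subseteq\mathbb{C}[z,z^{-1}]$), can be matched bijectively with elements of $\varSigma^2\mathcal{A}$ via the symbol $F$. The directness of the sum comes from the fact that the two families live in different spectral components of $F\otimes 1$ acting on $\varSigma^2\mathcal{H}$, so no nontrivial element of $\Omega_D^1(\mathcal{A})\otimes\mathcal{S}$ can equal an element of $\varSigma^2\mathcal{A}\cdot F$ modulo junk. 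This gives item (1), and reading off the splitting yields the formula in (3) for $\delta^0$.

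For $n\geq 2$ the main issue — and the principal technical obstacle — is to show that the ``$F\otimes$-type'' summand disappears in the quotient by $\pi(dJ_0^{n-1})$, i.e.\ becomes junk. Expanding a product $\prod_{i=1}^n[\varSigma^2 D,\xi_i]$ of $n$ commutators modulo compacts and distributing, each monomial is either a pure tensor $\prod[D,a_i]\otimes \prod T_i$ (which sits in $\pi(\Omega^n(\mathcal{A}))\otimes\mathcal{S}$), or contains at least one factor of the form $a_jF\otimes[N,T_j]$ (respectively $F\otimes f'$). To eliminate the latter, I would construct, for each such mixed monomial, a suitable element $\omega\in J_0^{n-1}(\varSigma^2\mathcal{A})$ whose differential $d\omega$ represents it; here condition (B) is crucial, because it says $[D,a]\in\mathcal{A}\otimes\mathcal{E}nd_\mathcal{A}(\mathcal{H}^\infty)$ so products of commutators can be manipulated using an $\mathcal{A}$-module structure, allowing one to realise $F^2\equiv I$ and collapse $a_1a_2\otimes\cdots$ terms into junk via relations such as $a_0[D,a_1][D,a_2]-a_0[D,a_1a_2]+a_0 a_1[D,a_2]\in\pi(J_0^1)$ on the base. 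This identifies $\Omega_{\varSigma^2D}^n(\varSigma^2\mathcal{A})$ with $\Omega_D^n(\mathcal{A})\otimes\mathcal{S}$, giving item (2).

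Once the isomorphisms (1) and (2) are established, the differential formulas (3)–(5) are obtained by applying $\varSigma^2 D$-commutators to representatives and reading off the components under the above splittings, keeping track that on the $\varSigma^2\mathcal{A}$ summand in degree $1$ the differential $\delta^1$ vanishes precisely because $(aF\otimes [N,T])\cdot[\varSigma^2 D,\cdot]$ produces a term with an extra $F^2\equiv I$ factor that was shown above to fall into junk. I expect the hardest part to be the junk identification in the $n\geq 2$ step: matching each $F$-type monomial with the differential of an explicit element of $J_0^{n-1}(\varSigma^2\mathcal{A})$, and showing no further relations have been imposed, so that the comparison with $\Omega_D^n(\mathcal{A})\otimes\mathcal{S}$ is a genuine isomorphism rather than a mere surjection.
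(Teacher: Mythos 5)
This theorem is not proved in the present paper at all: it is imported from \cite{cg} (Theorem 3.22 there), and what the text records are only the ingredients it reuses later, namely Propositions 3.8 and 3.10 and Lemmas 3.11, 3.12 and 3.15 of \cite{cg}. Measured against that visible skeleton, your outline points in the right direction — expand $[\varSigma^2D,\cdot]$ on the two kinds of generators, discard compact terms, organise $\pi(\Omega^n(\varSigma^2\mathcal{A}))$ as $\sum_{r}F^r\pi(\Omega^{n-r}(\mathcal{A}))\otimes\mathcal{S}$ plus a circle contribution, and then analyse the junk — so the strategy is essentially the one of \cite{cg}. But three steps are either wrong as stated or missing. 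First, ``$NT$ is finite rank, so the middle term is compact'' is not a valid inference: a bounded operator tensored with a finite-rank operator need not be compact (consider $I\otimes|e_0\rangle\langle e_0|$). The term $[F,a]\otimes NT$ is compact because $[F,a]$ is itself compact for a spectral triple (bounded commutator plus compact resolvent), and a compact operator tensored with a finite-rank one is compact; condition (A) alone does not give this, and the finiteness of $T$ alone certainly does not. Second, your justification of the directness of $\Omega_D^1(\mathcal{A})\otimes\mathcal{S}\oplus F\cdot\varSigma^2\mathcal{A}$ — that the two families ``live in different spectral components of $F\otimes 1$'' — cannot work: condition (A) says precisely that $[D,a]$ commutes with $F$ modulo compacts, and $[F,a]$ is compact as well, so both $[D,a]\otimes T$ and $aF\otimes S$ are block-diagonal with respect to the spectral decomposition of $F\otimes 1$ in the Calkin algebra. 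They occupy the same spectral components, and the directness has to be established by a different (and nontrivial) argument; this is part of the content of Proposition 3.8 and Theorem 3.20 of \cite{cg}.

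Third, and most seriously, the heart of the theorem — that for $n\geq 2$ every monomial carrying a factor of $F$, including the entire circle contribution $F^n\otimes\mathbb{C}[z,z^{-1}]$, lies in $\pi(dJ_0^{n-1}(\varSigma^2\mathcal{A}))$, and that conversely the junk meets $\pi(\Omega^n(\mathcal{A}))\otimes\mathcal{S}$ in exactly $\pi(dJ_0^{n-1}(\mathcal{A}))\otimes\mathcal{S}$ — is only announced (``I would construct, for each such mixed monomial, a suitable element $\omega\in J_0^{n-1}$''), not carried out. These are precisely the computations occupying Lemmas 3.11 through 3.15 of \cite{cg}, and without explicit junk elements the first inclusion gives at best a surjection onto $\Omega_D^n(\mathcal{A})\otimes\mathcal{S}$, while the absence of the second leaves open the possibility that the quotient collapses further. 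The circle part in particular requires the specific fact that $\pi_N(dJ_0^{n-1}(\mathbb{C}[z,z^{-1}]))=\pi_N(\Omega^n(\mathbb{C}[z,z^{-1}]))$ for $n\geq 2$, which your sketch does not isolate. So the proposal is a correct plan with the two hardest verifications left as declarations of intent, plus one incorrect compactness argument that needs repair.
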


\begin{remark}\rm
The dga $\,\Omega_{\varSigma^2 D}^\bullet\left(\varSigma^2 \mathcal{A}\right)$ can be described alternatively as follows. Notice that for the
$($graded$)$ algebra $\Omega_D^\bullet(\mathcal{A})$ one can consider $\varSigma^2\left(\Omega_D^\bullet(\mathcal{A})\right)=\Omega_D^\bullet
(\mathcal{A})\otimes\mathcal{S}\bigoplus\mathbb{C}[z,z^{-1}]$. This is a graded algebra whose degree zero term is $\,\mathcal{A}\otimes\mathcal{S}
\bigoplus\mathbb{C}[z,z^{-1}]=\varSigma^2\mathcal{A}$, and degree $n$ term is $\,\Omega_D^n(\mathcal{A})\otimes\mathcal{S}$ for $n\geq 1$.
That is,
\begin{center}
$\varSigma^2\left(\Omega_D^\bullet(\mathcal{A})\right)=\varSigma^2\mathcal{A}\bigoplus\Omega_D^1(\mathcal{A})\otimes\mathcal{S}\bigoplus
\Omega_D^2(\mathcal{A})\otimes\mathcal{S}\bigoplus\ldots\ldots$
\end{center}
as a graded algebra. Then, as a graded algebra $\,\Omega_{\varSigma^2 D}^\bullet\left(\varSigma^2 \mathcal{A}\right)=\varSigma^2\left(\Omega_D^
\bullet(\mathcal{A})\right)\bigoplus\varSigma^2\mathcal{A}$, where $\varSigma^2\mathcal{A}$ sits in the degree $1$ term.
\end{remark}

\begin{corollary}
The Cohomology of $\left(\,\Omega_{\varSigma^2 D}^\bullet(\varSigma^2\mathcal{A}),\delta^\bullet\right)$ is given by
\begin{enumerate}
\item $H^0(\varSigma^2\mathcal{A})=H^0(\mathcal{A})\otimes \mathcal{S}_{diag}\bigoplus\mathbb{C}$.
\item $H^1(\varSigma^2\mathcal{A})=H^1(\mathcal{A})\otimes \mathcal{S}_{diag}\bigoplus\mathcal{A}\otimes \mathcal{S}_{diag}\bigoplus Ker(d^1)\otimes
\mathcal{S}_{off}\bigoplus\mathbb{C}$.
\item $H^n(\varSigma^2\mathcal{A})=H^n(\mathcal{A})\otimes\mathcal{S}$, $\,$ for all $n\geq 2\, $.
\end{enumerate}
where $H^\bullet(\mathcal{A})$ denotes the cohomology of $(\,\Omega_D^\bullet(\mathcal{A}),d^\bullet)$, and $\,\mathcal{S}_{diag}\,,
\mathcal{S}_{off}$ denote spaces of finitely supported diagonal and off-diagonal matrices respectively.
\end{corollary}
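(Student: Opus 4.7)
The plan is to read off each cohomology group directly from the explicit formulas for $\delta^\bullet$ in Theorem \ref{final thm}, after decomposing $\mathcal{S} = \mathcal{S}_{diag}\oplus\mathcal{S}_{off}$ and analyzing the commutator $[N,\cdot]$. The single computational input is that $[N,e_{ij}] = (i-j)e_{ij}$ (so $[N,\cdot]$ kills the diagonal and is invertible on each off-diagonal slot), and under the identification $z^n \mapsto l^n$, $z^{-n}\mapsto l^{*n}$ $(n>0)$ one checks $f' = -\sum n\lambda_n z^n$ for $f = \sum\lambda_n z^n$. Hence $f\mapsto f'$ on $\mathbb{C}[z,z^{-1}]$ has one-dimensional kernel $\mathbb{C}$ and one-dimensional cokernel $\mathbb{C}$ (the constants).

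For $H^0$, a general kernel element $\sum a_{ij}\otimes e_{ij} + f$ of $\delta^0$ must satisfy $[D,a_{ij}] = 0$ for every $i,j$ (so each $a_{ij}\in H^0(\mathcal{A})$), together with $(i-j)a_{ij} = 0$ for $i\neq j$ and $f' = 0$. The first constraint forces the off-diagonal $a_{ij}$ to vanish and the second forces $f$ to be constant, yielding $H^0(\mathcal{A})\otimes\mathcal{S}_{diag}\oplus\mathbb{C}$.

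For $H^1$, the kernel is immediately $\ker(d^1)\otimes\mathcal{S}\oplus\varSigma^2\mathcal{A}$ since $\delta^1$ annihilates the second summand. I would then mod out by $\mathrm{im}(\delta^0)$, which splits into four independent pieces. First, the diagonal entries $a_{ii}$ of $T$ change the $\Omega^1$-component only by $\mathrm{im}(d^0)$ and leave $\varSigma^2\mathcal{A}$ alone, contributing $H^1(\mathcal{A})\otimes\mathcal{S}_{diag}$. Second, the subspace $\mathcal{A}\otimes\mathcal{S}_{diag}\subseteq\beta$ is never touched (since $[N,T_{diag}] = 0$) and survives intact. Third, on each off-diagonal slot $(i,j)$ the image is the graph $\{([D,a],(i-j)a):a\in\mathcal{A}\}$ inside $\ker(d^1)\oplus\mathcal{A}$; inverting the nonzero scalar $(i-j)$ normalizes the $\mathcal{A}$-coordinate to zero, after which $\ker(d^1)$ remains completely free, giving $\ker(d^1)\otimes\mathcal{S}_{off}$. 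Fourth, quotienting the $\mathbb{C}[z,z^{-1}]$-summand by the image of $f\mapsto f'$ produces $\mathbb{C}$. The direct sum matches the claim.

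For $n\geq 2$ the computation is almost automatic: $\Omega^n_{\varSigma^2 D} = \Omega_D^n(\mathcal{A})\otimes\mathcal{S}$ and $\delta^n = d^n\otimes 1$, so tensoring gives $H^n = H^n(\mathcal{A})\otimes\mathcal{S}$. The only thing to verify at $n=2$ is that the $\varSigma^2\mathcal{A}$ summand of $\Omega^1_{\varSigma^2 D}$ contributes nothing to $\mathrm{im}(\delta^1)$, which is immediate from $\delta^1|_{\varSigma^2\mathcal{A}} = 0$. The principal obstacle is the coupled off-diagonal analysis in $H^1$: one must exploit the invertibility of the scalar $(i-j)$ for $i\neq j$ to decouple the $\ker(d^1)$-component cleanly from the $\mathcal{A}$-component. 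Every other step reduces to straightforward bookkeeping once the splittings $\mathcal{S}=\mathcal{S}_{diag}\oplus\mathcal{S}_{off}$ and $\varSigma^2\mathcal{A} = \mathcal{A}\otimes\mathcal{S}\oplus\mathbb{C}[z,z^{-1}]$ are in place.
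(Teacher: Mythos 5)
Your proposal is correct and follows essentially the same route as the paper: identify $\ker(\delta^0)$ directly, split $\mathcal{S}=\mathcal{S}_{diag}\oplus\mathcal{S}_{off}$ so that $\delta^0$ decomposes, and use invertibility of the scalar $(i-j)$ on each off-diagonal slot to reduce the quotient $\bigl(\ker(d^1)\oplus\mathcal{A}\bigr)\otimes\mathcal{S}_{off}\big/\mathrm{im}(\delta^0_1)$ to $\ker(d^1)\otimes\mathcal{S}_{off}$, which is exactly the paper's isomorphism $\omega_{ij}\mapsto\omega_{ij}-(i-j)^{-1}d^0a_{ij}$. The treatment of the $\mathbb{C}[z,z^{-1}]$ summand and of the case $n\geq 2$ also matches the paper's argument.
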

\begin{proof}
We have $H^0(\varSigma^2\mathcal{A})=Ker(\delta^0)$. Writing $a\otimes T=\sum_{i,j}a_{ij}\otimes e_{ij}$ in terms of elementary matrices $(e_{ij})$
we have,
\begin{eqnarray*}
Ker(\delta^0) & = & \{a_{ij}=0\,\,\,for\,i\neq j\,,\,[D,a_{ii}]=0\,\,\forall\,i\,,\,f=\,constant\}\\
& = & Ker(d^0)\otimes\mathcal{S}_{diag}\bigoplus\mathbb{C}\\
& = & H^0(\mathcal{A})\otimes\mathcal{S}_{diag}\bigoplus\mathbb{C}\,.
\end{eqnarray*}
This proves part $(1)$. For part $(2)$ observe that
\begin{center}
$Ker(\delta^1)=\,Ker(d^1)\otimes \mathcal{S}\bigoplus\varSigma^2\mathcal{A}$.
\end{center}
and $Im(\delta^0)=Im(\delta^0|_{\mathcal{A}\otimes\mathcal{S}})\bigoplus\mathbb{C}[z,z^{-1}]/\mathbb{C}$. Hence, we need to determine
$\frac{Ker(d^1)\otimes\mathcal{S}\bigoplus\mathcal{A}\otimes\mathcal{S}}{Im(\delta^0|_{\mathcal{A}\otimes\mathcal{S}})}$. Now,
\begin{eqnarray*}
\delta^0 :\,\mathcal{A}\otimes\mathcal{S}_{off}\bigoplus\mathcal{A}\otimes\mathcal{S}_{diag} & \longrightarrow & \left(Ker(d^1)\oplus\mathcal{A}
\right)\otimes\mathcal{S}_{off}\bigoplus\left(Ker(d^1)\oplus\mathcal{A}\right)\otimes\mathcal{S}_{diag}\\
\left(\sum_{i\neq j}a_{ij}\otimes e_{ij}\,,\sum_ib_i\otimes e_{ii}\right) & \longmapsto & \left(\sum_{i\neq j}\left(d^0a_{ij},(i-j)a_{ij}\right)
\otimes e_{ij}\,,\sum_i\left(d^0b_i,0\right)\otimes e_{ii}\right)
\end{eqnarray*}
Hence, $\delta^0=\delta^0_1\oplus\delta^0_2$. Observe that $Im(\delta^0_2)=Im(d^0)\otimes\mathcal{S}_{diag}$. Now,
\begin{eqnarray*}
\Psi:\,\frac{\left(Ker(d^1)\oplus\mathcal{A}\right)\otimes\mathcal{S}_{off}}{Im(\delta^0_1)} & \longrightarrow & Ker(d^1)\otimes\mathcal{S}_{off}\\
\sum_{i\neq j}\left[(\omega_{ij},a_{ij})\otimes e_{ij}\right] & \longmapsto & \sum_{i\neq j}\left(\omega_{ij}-(i-j)^{-1}d^0a_{ij}\right)\otimes e_{ij}
\end{eqnarray*}
is a well-defined linear isomorphism. Hence,
\begin{eqnarray*}
H^1(\varSigma^2\mathcal{A}) & = & \frac{\left(Ker(d^1)\oplus\mathcal{A}\right)\otimes\mathcal{S}_{off}}{Im(\delta^0_1)}\bigoplus\frac{Ker(d^1)}
{Im(d^0)}\otimes\mathcal{S}_{diag}\bigoplus\mathcal{A}\otimes\mathcal{S}_{diag}\bigoplus\mathbb{C}\\
& = & Ker(d^1)\otimes\mathcal{S}_{off}\bigoplus H^1(\mathcal{A})\otimes\mathcal{S}_{diag}\bigoplus\mathcal{A}\otimes\mathcal{S}_{diag}\bigoplus\mathbb{C}.
\end{eqnarray*}
This proves part $(2)$, and part $(3)$ is easy to verify.
\end{proof}

If $\mathcal{A}$ comes with a decreasing filtration $$\mathcal{A}=\mathcal{A}_0\supseteq\mathcal{A}_{-1}\supseteq\ldots\ldots\supseteq\{0\}$$ then
the algebra $\varSigma^2\mathcal{A}$ has the induced filtration. By Lemma (\ref{filtered algebra}), $\,\Omega_{\varSigma^2 D}^\bullet(\varSigma^2
\mathcal{A})$ then becomes a filtered algebra.

\begin{proposition}
The associated graded algebra of the filtered algebra $\,\Omega_{\varSigma^2 D}^\bullet(\varSigma^2\mathcal{A})$ is $$\mathcal{G}(\varSigma^2
\mathcal{A})=\bigoplus_{n\leq 0}\left(\frac{\mathcal{A}_n}{\mathcal{A}_{n-1}}\bigoplus\left(\frac{\Omega^1(\mathcal{A}_n)}{\Omega^1(\mathcal{A}_{n-1})}
\oplus\frac{\mathcal{A}_n}{\mathcal{A}_{n-1}}\right)\bigoplus\left(\bigoplus_{p\geq 2}\frac{\Omega^p(\mathcal{A}_n)}{\Omega^p(\mathcal{A}_{n-1})+
dJ_0^{p-1}(\mathcal{A}_n)}\right)\right)\otimes\mathcal{S}\,.$$
Hence, $\mathcal{G}(\varSigma^2\mathcal{A})$ depends only on the filtration of $\mathcal{A}$.
\end{proposition}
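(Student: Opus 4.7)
The plan is to apply Proposition \ref{associated graded algebra} to $\varSigma^2 \mathcal{A}$ with its induced filtration, and then unfold each graded piece using the explicit description of $\,\Omega^\bullet_{\varSigma^2 D}(\varSigma^2 \mathcal{A})$ supplied by Theorem \ref{final thm}.

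First, Proposition \ref{associated graded algebra} gives
$$\mathcal{G}(\varSigma^2\mathcal{A}) = \bigoplus_{n \leq 0} \bigoplus_{p \geq 0} \frac{\Omega^p(\varSigma^2\mathcal{A}_n)}{\Omega^p(\varSigma^2\mathcal{A}_{n-1}) + J^{p,n}_{\varSigma^2 D}},$$
and the manipulation in its proof rewrites each summand as $\,\Omega^p_{\varSigma^2 D}(\varSigma^2\mathcal{A}_n)/\mathrm{Im}(\Phi^{p,n-1})$. The task thus reduces to identifying the Dirac dga at the $n$-th level of the filtration and quotienting by the image from the $(n-1)$-th level.

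Second, since the hypotheses $(A)$ and $(B)$ descend to each $\mathcal{A}_n$, Theorem \ref{final thm} applies levelwise and recognizes $\,\Omega^p_{\varSigma^2 D}(\varSigma^2\mathcal{A}_n)$ as $\Omega^p_D(\mathcal{A}_n) \otimes \mathcal{S}$ for $p \geq 2$, as $\Omega^1_D(\mathcal{A}_n) \otimes \mathcal{S} \oplus \varSigma^2\mathcal{A}_n$ for $p=1$, and as $\varSigma^2\mathcal{A}_n$ itself for $p=0$. Tensoring by the fixed vector space $\mathcal{S}$ commutes with quotients, so the $\mathcal{S}$-summand in degree $p$ contributes $(\Omega^p_D(\mathcal{A}_n)/\Omega^p_D(\mathcal{A}_{n-1})) \otimes \mathcal{S}$. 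The same algebraic manipulation as in Proposition \ref{associated graded algebra} rewrites this as $\Omega^p(\mathcal{A}_n)/(\Omega^p(\mathcal{A}_{n-1}) + dJ_0^{p-1}(\mathcal{A}_n)) \otimes \mathcal{S}$, matching the displayed formula in degrees $p \geq 2$; for $p = 1$ the junk term $dJ_0^0$ vanishes and one is left with $\Omega^1(\mathcal{A}_n)/\Omega^1(\mathcal{A}_{n-1}) \otimes \mathcal{S}$.

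Third, the extra $\varSigma^2\mathcal{A}_n$ summand appearing in degrees $0$ and $1$ requires analysing the induced filtration on $\varSigma^2\mathcal{A}$. Because the polynomial piece $\mathbb{C}[z,z^{-1}]$ arises from the $\mathbb{C}$-linear splitting $\sigma'$ and is common to every filtration level of $\varSigma^2\mathcal{A}$, one obtains $\varSigma^2\mathcal{A}_n/\varSigma^2\mathcal{A}_{n-1} \cong (\mathcal{A}_n/\mathcal{A}_{n-1}) \otimes \mathcal{S}$; this produces the two $\mathcal{A}_n/\mathcal{A}_{n-1}$ summands in the stated formula and is the reason $\mathcal{G}(\varSigma^2\mathcal{A})$ depends only on the filtration of $\mathcal{A}$. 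The main obstacle is the compatibility of the junk subspaces of the $\varSigma^2 D$-representation with the induced filtration; this compatibility, though nontrivial, is essentially built into the proof of Theorem \ref{final thm} in \cite{cg}, where the identifications are constructed explicitly from the tensor product structure and therefore respect passage to subalgebras.
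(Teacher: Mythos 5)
Your proposal is correct and follows essentially the same route as the paper: apply Proposition \ref{associated graded algebra} to the induced filtration on $\varSigma^2\mathcal{A}$ and then identify each graded piece degree by degree ($p=0$, $p=1$, $p\geq 2$) using the structure of forms on the quantum double suspension, with the $\mathbb{C}[z,z^{-1}]$ summand cancelling because it lies in the junk at every filtration level. The only cosmetic difference is that the paper does not invoke Theorem \ref{final thm} as a black box but works directly with the decompositions $\pi(\Omega^p(\varSigma^2\mathcal{A}_n))=\pi(\Omega^p(\mathcal{A}_n\otimes\mathcal{S}))\oplus\pi(\Omega^p(\mathbb{C}[z,z^{-1}]))$ and $\pi(dJ_0^{p-1})$ from Propositions $3.8$, $3.10$ and Theorem $3.20$ of \cite{cg}, which is precisely the compatibility-with-the-filtration point you defer to at the end.
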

\begin{proof}
By Lemma (\ref{associated graded algebra}), the associated graded algebra is $$\mathcal{G}(\varSigma^2\mathcal{A})=\bigoplus_{n\leq 0}
\bigoplus_{p\geq 0}\frac{\Omega^p(\varSigma^2\mathcal{A}_n)}{\Omega^p(\varSigma^2\mathcal{A}_{n-1})+J^{p,n}(\varSigma^2\mathcal{A})}\,.$$
For $p=0$,
\begin{eqnarray*}
\frac{\Omega^p(\varSigma^2\mathcal{A}_n)}{\Omega^p(\varSigma^2\mathcal{A}_{n-1})+J^{p,n}(\varSigma^2\mathcal{A})} & \cong & \frac{\varSigma^2
\mathcal{A}_n}{\varSigma^2\mathcal{A}_{n-1}}\\
& \cong & \frac{\mathcal{A}_n}{\mathcal{A}_{n-1}}\otimes\mathcal{S}\,,
\end{eqnarray*}
and for $p\geq 2$,
\begin{eqnarray*}
&   & \frac{\Omega^p(\varSigma^2\mathcal{A}_n)}{\Omega^p(\varSigma^2\mathcal{A}_{n-1})+J^{p,n}(\varSigma^2\mathcal{A})}\\
& \cong & \frac{\pi(\Omega^p(\varSigma^2\mathcal{A}_n))}{\pi(\Omega^p(\varSigma^2\mathcal{A}_{n-1}))+\pi(dJ_0^{p-1,n}(\varSigma^2\mathcal{A}))}\\
& \cong & \frac{\pi(\Omega^p(\mathcal{A}_n\otimes\mathcal{S}))\bigoplus\pi(\Omega^p(\mathbb{C}[z,z^{-1}]))}{\left(\pi(\Omega^p(\mathcal{A}_{n-1}\otimes
\mathcal{S}))+\pi(dJ_0^{p-1}(\mathcal{A}_n\otimes\mathcal{S}))\right)\bigoplus\pi(\Omega^p(\mathbb{C}[z,z^{-1}]))}\\
& \cong & \frac{\pi(\Omega^p(\mathcal{A}_n\otimes\mathcal{S}))}{\pi\left(\Omega^p(\mathcal{A}_{n-1}\otimes\mathcal{S})+dJ_0^{p-1}(\mathcal{A}_n
\otimes\mathcal{S})\right)}\\
& \cong & \frac{\pi(\Omega^p(\mathcal{A}_n))\otimes\mathcal{S}}{\pi\left(\Omega^p(\mathcal{A}_{n-1})+dJ_0^{p-1}(\mathcal{A}_n)\right)\otimes\mathcal{S}}\\
& \cong & \frac{\Omega^p(\mathcal{A}_n)}{\Omega^p(\mathcal{A}_{n-1})+dJ_0^{p-1}(\mathcal{A}_n)}\otimes\mathcal{S}
\end{eqnarray*}
by Proposition $(3.8)$ and $(3.10)$ in (\cite{cg}). Finally, for $p=1$
\begin{eqnarray*}
\frac{\Omega^1(\varSigma^2\mathcal{A}_n)}{\Omega^1(\varSigma^2\mathcal{A}_{n-1})+J^{1,n}(\varSigma^2\mathcal{A})} & \cong &
\frac{\Omega^1(\mathcal{A}_n)}{\Omega^1(\mathcal{A}_{n-1})}\otimes\mathcal{S}\bigoplus\frac{\mathcal{A}_n}{\mathcal{A}_{n-1}}\otimes\mathcal{S}\,.
\end{eqnarray*} by part $(1)$ of Th. $3.20$ in (\cite{cg}). Hence, our claim follows.
\end{proof}
\bigskip


\section{FGR DGA and The Quantum Double Suspension}

In this section our objective is to compute the dga of Fr\"{o}hlich et al. for the quantum double suspension. We first recall its definition from
(\cite{fgr}).
\begin{definition}
For any $p$-summable spectral triple $(\mathcal{A},\mathcal{H},D)$ consider the following functional
\begin{eqnarray}\label{Frohlich defn.}
\begin{aligned}
\int:\pi(\Omega^\bullet(\mathcal{A})) &\longrightarrow \mathbb{C}\\
[v] &\longmapsto \lim_{\varepsilon\rightarrow 0^+}\,\frac{Tr_{\mathcal{H}}(v e^{-\varepsilon D^2})}{Tr_{\mathcal{H}}(e^{-\varepsilon D^2})}
\end{aligned}
\end{eqnarray}
Let $$K(\mathcal{A}):=\bigoplus_{n=0}^\infty K^n(\mathcal{A})\,,\quad K^n(\mathcal{A}):=\left\{\omega\in\Omega^n(\mathcal{A})\,\colon\int\pi
(\omega)^*\pi(\omega)=0\right\}.$$
Then $$\widetilde \Omega_D^\bullet(\mathcal{A}):=\bigoplus_{n=0}^\infty\widetilde \Omega_D^n(\mathcal{A})\,,\quad\widetilde \Omega_D^n(\mathcal{A})
:=\Omega^n(\mathcal{A})/(K^n+dK^{n-1})\cong\pi(\Omega^n(\mathcal{A}))/\pi(K^n+dK^{n-1})$$ is a differential graded algebra called the FGR dga.
\end{definition}

\begin{remark}\rm
\begin{enumerate}
\item Note that for a $p$-summable spectral triple $(\mathcal{A},\mathcal{H},D)$,
\begin{center}
 $Lim_{\lambda\rightarrow \infty}\left(\frac{1}{\lambda}Tr\left(Te^{-\lambda^{-2/p}D^2}\right)\right)=\Gamma(\frac{p}{2}+1)Tr_\omega(T|D|^{-p})$
\end{center}
for all $\,T\in\mathcal{B}(\mathcal{H})\,($\cite{con}, Page $563)$. Hence, the funcional considered in equation $(\ref{Frohlich defn.})$ is nothing
but the Dixmier trace $Tr_\omega$ upto a positive constant.
\item Since, for any compact operator $K\in\mathcal{K}(\mathcal{H}),\,Tr_\omega(K|D|^{-p})=0$ the functional in (\ref{Frohlich defn.}) is
well-defined on $\,\pi\left(\Omega^\bullet(\mathcal{A})\right)\subseteq\mathcal{B}(\mathcal{H})/\mathcal{K}(\mathcal{H})$.
\item For the classical case of manifolds and the noncommutative torus, $K^n=J_0^n$ (Def. \ref{our defn of Connes}). Hence, the FGR dga coincides
with the Dirac dga in these cases (\cite{fgr}).
\end{enumerate}
\end{remark}

\begin{lemma}
The association $\mathcal{G}:(\mathcal{A},\mathcal{H},D)\longmapsto\widetilde \Omega_D^\bullet(\mathcal{A})$ gives a covariant functor from
$\mathcal{S}pec$ to $DGA$, the category of differential graded algebras over $\mathbb{C}$.
\end{lemma}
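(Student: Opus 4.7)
The plan is to mimic the proof of Proposition \ref{Connes is functor}, but with the extra verification that the heat-kernel functional $\int$ appearing in the definition of $\widetilde{\Omega}_D^\bullet$ is preserved by morphisms of spectral triples. Given a morphism $(\phi,\Phi):(\mathcal{A}_1,\mathcal{H}_1,D_1)\to(\mathcal{A}_2,\mathcal{H}_2,D_2)$ in $\mathcal{S}pec$, I would define $\Psi:\widetilde{\Omega}_{D_1}^\bullet(\mathcal{A}_1)\to\widetilde{\Omega}_{D_2}^\bullet(\mathcal{A}_2)$ by the same formula as in Proposition \ref{Connes is functor}, namely on class representatives
\begin{equation*}
\Psi\Bigl[\sum a_0\prod_{i=1}^n[D_1,a_i]\Bigr]=\Bigl[\sum\phi(a_0)\prod_{i=1}^n[D_2,\phi(a_i)]\Bigr].
\end{equation*}

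First I would establish the intertwining identity \eqref{intertwining relation}, rewritten here as $\pi_2(\phi_\ast\omega)\,\Phi=\Phi\,\pi_1(\omega)$ for every universal form $\omega\in\Omega^\bullet(\mathcal{A}_1)$, where $\phi_\ast$ denotes the induced map on universal differential algebras. Since $\Phi$ is unitary, this gives $\pi_2(\phi_\ast\omega)=\Phi\,\pi_1(\omega)\,\Phi^\ast$. Next, using that $\Phi$ intertwines $D_1$ and $D_2$, I get $\Phi D_1^2=D_2^2\Phi$ and hence $\Phi\,e^{-\varepsilon D_1^2}=e^{-\varepsilon D_2^2}\,\Phi$. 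This also shows $|D_1|$ and $|D_2|$ are unitarily equivalent, so both spectral triples share the same summability (necessary for $\int$ to be defined on both sides).

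The core step is invariance of the functional $\int$. For $\omega\in\Omega^n(\mathcal{A}_1)$, cyclicity of the trace combined with the two identities above yields
\begin{equation*}
\mathrm{Tr}_{\mathcal{H}_2}\bigl(\pi_2(\phi_\ast\omega)^\ast\pi_2(\phi_\ast\omega)\,e^{-\varepsilon D_2^2}\bigr)=\mathrm{Tr}_{\mathcal{H}_1}\bigl(\pi_1(\omega)^\ast\pi_1(\omega)\,e^{-\varepsilon D_1^2}\bigr),
\end{equation*}
and likewise $\mathrm{Tr}_{\mathcal{H}_2}(e^{-\varepsilon D_2^2})=\mathrm{Tr}_{\mathcal{H}_1}(e^{-\varepsilon D_1^2})$. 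Taking the limit $\varepsilon\to 0^+$ gives $\int_{D_2}\pi_2(\phi_\ast\omega)^\ast\pi_2(\phi_\ast\omega)=\int_{D_1}\pi_1(\omega)^\ast\pi_1(\omega)$, so $\phi_\ast$ maps $K^n(\mathcal{A}_1)$ into $K^n(\mathcal{A}_2)$. Because $\phi_\ast$ is a morphism of universal differential graded algebras it commutes with $d$, hence it also maps $dK^{n-1}(\mathcal{A}_1)$ into $dK^{n-1}(\mathcal{A}_2)$. This is exactly what is needed for $\Psi$ to descend to the quotients $\widetilde{\Omega}_{D_i}^n=\Omega^n/(K^n+dK^{n-1})$.

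Once well-definedness is in hand, checking that $\Psi$ respects the dga structure (multiplication, differential, $\ast$) is routine, as is verifying $\mathcal{G}(\mathrm{id})=\mathrm{id}$ and $\mathcal{G}((\phi',\Phi')\circ(\phi,\Phi))=\mathcal{G}(\phi',\Phi')\circ\mathcal{G}(\phi,\Phi)$. The main conceptual obstacle is the preservation of the heat-kernel functional, and this is precisely where unitarity of $\Phi$ (rather than mere surjectivity or linearity) is essential: cyclicity of the trace must be applied to $\Phi$ and $\Phi^\ast$, which in turn justifies the remark following Proposition \ref{Connes is functor} about why the category $\mathcal{S}pec$ is defined with unitary intertwiners.
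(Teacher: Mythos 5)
Your proposal is correct and follows essentially the same route as the paper's proof: the intertwining relation $\pi_2(\phi_\ast\omega)=\Phi\,\pi_1(\omega)\,\Phi^\ast$, the identity $\Phi e^{-tD_1^2}\Phi^\ast=e^{-tD_2^2}$, and unitary invariance of the trace to show the heat-kernel functional (hence each $K^n$) is preserved. Your explicit remark that $\phi_\ast$ commutes with $d$ and therefore also preserves $dK^{n-1}$ is a small completeness point the paper leaves implicit, but it is not a different argument.
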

\begin{proof}
Consider two objects $(\mathcal{A}_1,\mathcal{H}_1,D_1),(\mathcal{A}_2,\mathcal{H}_2,D_2)\in \mathcal{O}b(\mathcal{S}pec)$ and suppose there is a
morphism $(\phi\,,\Phi):(\mathcal{A}_1,\mathcal{H}_1,D_1)\longrightarrow (\mathcal{A}_2,\mathcal{H}_2,D_2)$. Define
\begin{align*}
\Psi:\widetilde \Omega_{D_1}^\bullet(\mathcal{A}_1) &\longrightarrow \widetilde \Omega_{D_2}^\bullet(\mathcal{A}_2)\\
\left[\sum a_0\prod_{i=1}^n[D_1,a_i]\right] &\longmapsto\left[\sum \phi(a_0)\prod_{i=1}^n[D_2,\phi(a_i)]\right]
\end{align*}
for all $\,a_j\in \mathcal{A}_1,\,n\geq 0\,$. To show $\Psi$ is well-defined we must show that $\Psi(\pi_1(K_1^m))\subseteq \pi_2(K_2^m)$
for all $m\geq 0$. Observe that
\begin{eqnarray}
\Phi\circ\left(\sum a_0\prod_{i=1}^n[D_1,a_i]\right)=\left(\sum \phi(a_0)\prod_{i=1}^n[D_2,\phi(a_i)]\right)\circ\Phi\,.
\end{eqnarray}
Now, $\,\Phi D_1=D_2\Phi$ will imply that $\,\Phi e^{-tD_1^2}\Phi^*=e^{-tD_2^2}$. Let us denote
\begin{align*}
\pi_1(\omega) &:= \sum a_0\prod_{i=1}^n[D_1,a_i]\\
\pi_2(\widetilde \omega) &:= \sum \phi(a_0)\prod_{i=1}^n[D_2,\phi(a_i)]
\end{align*}
Now,
\begin{eqnarray*}
Tr(\pi_2(\widetilde \omega)^*\pi_2(\widetilde \omega)e^{-tD_2^2})
& = & Tr(\pi_2(\widetilde \omega)^*\pi_2(\widetilde \omega)\Phi e^{-tD_1^2}\Phi^*)\\
& = & Tr(\pi_1(\omega)^*\Phi^*\Phi\pi_1(\omega)e^{-tD_1^2})\\
& = & Tr(\pi_1(\omega)^*\pi_1(\omega)e^{-tD_1^2})
\end{eqnarray*}
and $Tr(e^{-tD_1^2})=Tr(e^{-tD_2^2})$. This proves that $\Psi(\pi_1(K_1^m))=\pi_2(K_2^m)$ i,e. $\Psi$ is well-defined, and one can check that it
is a dga morphism.
\end{proof}

\begin{remark}\rm
\begin{enumerate}
\item Although, surjectivity of $\,\Phi$ is enough to ensure that Dirac dga is a functor, it fails in this case of FGR dga. This is the reason
we have chosen $\Phi$ to be unitary. Unless $\Phi$ is both one-one and onto it is not guaranteed that $\Psi(\pi_1(K_1^m))\subseteq\pi_2(K_2^m)$.
\item One may come up with a different definition of the category $\,\mathcal{S}pec$ of spectral triples which allows larger set of morphisms than
ours; such that both the Dirac dga and FGR dga become functor. Here we stress to the point that \textbf{it will not condradict} our main result in
this article as we shall see now. Because of this reason we have chosen the simplest possible definition for the category $\mathcal{S}pec$.
\end{enumerate}
\end{remark}

To make the computation possible we need to use the functional in (\ref{Frohlich defn.}) in a different disguise, namely
\begin{eqnarray}\label{our defn. of Frohlich}
\begin{aligned}
\oint:\pi\left(\Omega^\bullet(\mathcal{A})\right) &\longrightarrow \mathbb{C}\\
[\widetilde{v}] &\longmapsto\,\lim_{t\rightarrow 0}\left(t^p Tr\left(\widetilde v e^{-t|D|}\right)\right)
\end{aligned}
\end{eqnarray}

Well-definedness of this functional follows from the next lemma.
\begin{lemma}\label{justification for our defn}
Let $(\mathcal{A},\mathcal{H},D)$ be a $p$-summable spectral triple. Then the functional $\oint$ is equal to the Dixmier trace upto a positive
constant $($which depends only on $p)$. 
\end{lemma}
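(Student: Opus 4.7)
The plan is to identify $\oint$ with the Dixmier trace through the Mellin-transform / heat-kernel chain of identifications that already underlies the analogous $D^2$-formula recalled in Remark~$3.2(1)$. The three ingredients are the Mellin integral representation of negative powers of $|D|$, Connes' residue formula for the Dixmier trace, and a Karamata--Hardy--Littlewood Tauberian theorem to pass between the Mellin side and the heat-kernel side.

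First I would record the functional-calculus identity
\[
|D|^{-s}=\frac{1}{\Gamma(s)}\int_0^\infty t^{s-1}e^{-t|D|}\,dt,\qquad Re(s)>0,
\]
valid on the orthogonal complement of $Ker(D)$; the kernel is finite-dimensional under $p$-summability, so it contributes only a bounded correction irrelevant for the leading asymptotics. For $Re(s)>p$ the operator $\widetilde v\,|D|^{-s}$ is trace class, and Fubini gives
\[
Tr(\widetilde v\,|D|^{-s})=\frac{1}{\Gamma(s)}\int_0^\infty t^{s-1}Tr(\widetilde v\,e^{-t|D|})\,dt.
\]
I would then invoke Connes' identification of the Dixmier trace with the zeta-function residue,
\[
Tr_\omega(\widetilde v\,|D|^{-p})=\lim_{s\to p^+}(s-p)\,Tr(\widetilde v\,|D|^{-s}),
\]
available on the measurable elements of $\mathcal{L}^{(1,\infty)}$---a class that contains $\widetilde v\,|D|^{-p}$ for the bounded operators $\widetilde v\in\pi(\Omega^\bullet(\mathcal{A}))$ considered here. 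This converts the problem into reading off the simple-pole residue at $s=p$ of the above Mellin transform from the small-$t$ asymptotic of $Tr(\widetilde v\,e^{-t|D|})$.

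The last step is a Tauberian argument. Writing $\widetilde v=(T_1-T_2)+i(T_3-T_4)$ with each $T_j\ge 0$, via real/imaginary parts and the spectral theorem, each function $t\mapsto Tr(T_j\,e^{-t|D|})$ is positive and monotone decreasing in $t$, so Karamata's theorem converts the residue information into
\[
\lim_{t\to 0^+}t^p\,Tr(T_j\,e^{-t|D|})=c_p\,Tr_\omega(T_j|D|^{-p}),
\]
with an explicit positive constant $c_p$ expressible through $\Gamma(p)$; recombining the four pieces by linearity yields the claimed formula on all of $\pi(\Omega^\bullet(\mathcal{A}))$.

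The main obstacle I anticipate is precisely the positivity/monotonicity hypothesis of Karamata, which forces the decomposition above and requires a separate existence argument for each piece's limit before linearity can be invoked. A secondary subtlety is ensuring that the measurability hypothesis in Connes' residue formula actually holds for $\widetilde v\,|D|^{-p}$ with $\widetilde v\in\pi(\Omega^\bullet(\mathcal{A}))$; for the regular, simple-dimension-spectrum spectral triples produced by iterating QDS this follows from the existence of an asymptotic expansion of the heat trace, but in full generality it has to be imposed or verified explicitly.
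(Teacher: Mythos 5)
Your route (Mellin transform, zeta residue, Tauberian theorem) is not the one the paper takes, and it contains a genuine gap at the Tauberian step. The paper's proof is a one-line citation: Sukochev and Zanin prove the identity $\omega\big(\tfrac{1}{t}Tr(\exp(-(tA)^{-q})B)\big)=\Gamma(1+\tfrac{1}{q})\,Tr_\omega(AB)$ for \emph{every} bounded $B$ and every positive $A\in\mathcal{L}^{(1,\infty)}$, where $\omega$ is a suitable extended limit; substituting $q=1/p$ and $A=|D|^{-p}$ gives the lemma at once with constant $\Gamma(1+p)$, with no measurability hypothesis and no asymptotic expansion. The price is that both sides are generalized limits, which is the reading under which the lemma is actually true and used.

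The gap in your argument is the sentence ``Karamata's theorem converts the residue information into $\lim_{t\to0^+}t^pTr(T_je^{-t|D|})=c_pTr_\omega(T_j|D|^{-p})$.'' Karamata's theorem runs between heat-kernel asymptotics and counting-function asymptotics; it does \emph{not} upgrade the existence of $\lim_{s\to p^+}(s-p)Tr(T_j|D|^{-s})$ to the existence of the heat-kernel limit, and positivity plus monotonicity of $t\mapsto Tr(T_je^{-t|D|})$ is not enough to force that implication. Concretely, take $\mathcal{A}=\mathbb{C}$, $T_j=1$, and $|D|$ with eigenvalue counting function $N(\lambda)=\lambda^p(C+\sin\log\lambda)$ for a large constant $C$: this is a $p$-summable spectral triple, the oscillatory part contributes $\int_0^\infty e^{(p-s)u}\sin u\,du=((s-p)^2+1)^{-1}$ to the zeta function, so $(s-p)\zeta(s)\to Cp$ as $s\to p^+$, yet $N(\lambda)/\lambda^p$ has no limit and hence, by the genuine (two-sided) Karamata theorem, $t^pTr(e^{-t|D|})$ has no limit either. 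Passing from the simple pole to the small-$t$ asymptotics requires Wiener--Ikehara-type boundary regularity of $\zeta$ on the whole line $Re(s)=p$, which a general $p$-summable triple does not supply; this also shows that the lemma's ordinary $\lim_{t\to0}$ must be read as an extended limit, so an argument aimed at producing an honest limit cannot succeed in the stated generality. Together with the measurability of $\tilde v|D|^{-p}$, which you correctly flag as unverified, your proof only covers triples admitting heat-trace expansions, whereas the cited Sukochev--Zanin identity gives the statement unconditionally.
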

\begin{proof}
Recall the following equality
\begin{center}
$\omega\left(\frac{1}{t}Tr\left(exp(-(tA)^{-q})B\right)\right)=\Gamma\left(1+\frac{1}{q}\right)Tr_\omega(AB)$
\end{center}
proved in (\cite{suz}) for any $B\in\mathcal{B}(\mathcal{H})$. Now take $q=1/p$ and $A=|D|^{-p}$.
\end{proof}

\begin{corollary}
For any $\,T_1\otimes T_2\in\mathcal{B}(\mathcal{H}\otimes\ell^2(\mathbb{N}))\,$,
\begin{center}
$t^{p+1}Tr\left((T_1 \otimes T_2) e^{-t|\varSigma^2 D|}\right)=t^pTr\left(T_1 e^{-t|D|}\right)\, tTr\left(T_2 e^{-tN}\right)\,$.
\end{center}
\end{corollary}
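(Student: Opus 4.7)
The plan is to reduce the trace on $\mathcal{H}\otimes\ell^2(\mathbb{N})$ to a product of traces by first identifying $|\varSigma^2 D|$ explicitly as a tensor sum.

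\textbf{Step 1: Compute $(\varSigma^2 D)^2$.} Since $F = \mathrm{sign}(D)$ is a bounded Borel function of $D$, it commutes with $|D|$ and with $D$ itself, and satisfies $DF = FD = |D|$ (and $F^2 = I$ on the orthogonal complement of $\ker D$, which is all we need for a spectral-triple trace computation). Hence $D\otimes I$ and $F\otimes N$ commute on $\mathcal{H}^\infty\otimes\ell^2(\mathbb{N})$, and I would expand
\begin{equation*}
(\varSigma^2 D)^2 = (D\otimes I)^2 + 2(DF)\otimes N + (F\otimes N)^2 = D^2\otimes I + 2|D|\otimes N + I\otimes N^2 = (|D|\otimes I + I\otimes N)^2.
\end{equation*}

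\textbf{Step 2: Take the positive square root.} Both $|D|\otimes I$ and $I\otimes N$ are positive and commute, so their sum is positive. By uniqueness of the positive square root,
\begin{equation*}
|\varSigma^2 D| = |D|\otimes I + I\otimes N.
\end{equation*}

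\textbf{Step 3: Factor the heat semigroup and trace.} Because the two summands commute, the exponential factors as
\begin{equation*}
e^{-t|\varSigma^2 D|} = e^{-t|D|}\otimes e^{-tN}.
\end{equation*}
Then for a simple tensor $T_1\otimes T_2$ I apply multiplicativity of trace on product operators:
\begin{equation*}
\mathrm{Tr}\bigl((T_1\otimes T_2)\,e^{-t|\varSigma^2 D|}\bigr) = \mathrm{Tr}\bigl(T_1 e^{-t|D|}\bigr)\cdot\mathrm{Tr}\bigl(T_2 e^{-tN}\bigr).
\end{equation*}
Multiplying both sides by $t^{p+1}$ and distributing it as $t^p\cdot t$ yields the claimed identity.

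The only subtle point is Step 1--2: one must be careful about $F^2 = I$ when $D$ has a nontrivial kernel. Since summability is phrased in terms of $|D|^{-p}$ and the heat trace $\mathrm{Tr}(e^{-t|D|})$ is unaffected by a finite-dimensional kernel (or can be handled by the standard $D\mapsto D+P_{\ker D}$ device), this is a routine adjustment rather than a real obstacle; the genuine content of the corollary is simply the commuting tensor-sum decomposition of $|\varSigma^2 D|$.
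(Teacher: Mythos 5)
Your proof is correct and is exactly the argument the paper leaves implicit (the corollary is stated without proof): the whole content is the decomposition $|\varSigma^2 D| = |D|\otimes I + I\otimes N$, hence $e^{-t|\varSigma^2 D|} = e^{-t|D|}\otimes e^{-tN}$, and the factorization of the trace over the tensor product. Your closing remark on the $\ker D$ issue (where $F^2=I$ can fail) is the right caveat and is handled by the standard convention already needed for $\varSigma^2 D$ to have compact resolvent.
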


\begin{remark}\rm
It is this Corollary which makes the computation in this section possible. Moreover, since both the funcionals $\int$ and $\oint$ become equal upto
a constant, and we are interested in the spaces $K^n$ in Definition $(\ref{Frohlich defn.})$, it is absolutely permissible to choose $\oint$ over $\int$.
\end{remark}

\begin{lemma}\label{n=0 part}
$K^0(\varSigma^2\mathcal{A})=\mathcal{A}\otimes\mathcal{S}$.
\end{lemma}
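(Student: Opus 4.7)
The plan is to use the vector-space decomposition $\varSigma^2\mathcal{A}=\mathcal{A}\otimes\mathcal{S}\bigoplus\mathbb{C}[z,z^{-1}]$ together with the product formula from the preceding Corollary to compute $\oint x^*x$ for a general element $x=a\otimes T+1\otimes\sigma'(f)$ with $a\in\mathcal{A}$, $T\in\mathcal{S}$, $f\in\mathbb{C}[z,z^{-1}]$. The proof then reduces to showing that the Laurent-polynomial part alone is detected by $\oint$, while the $\mathcal{A}\otimes\mathcal{S}$ part and all cross terms vanish.

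First I would establish the inclusion $\mathcal{A}\otimes\mathcal{S}\subseteq K^0(\varSigma^2\mathcal{A})$. For a single tensor $a\otimes T$ we have $(a\otimes T)^*(a\otimes T)=a^*a\otimes T^*T$, and the Corollary gives
\begin{equation*}
t^{p+1}\mathrm{Tr}\bigl((a^*a\otimes T^*T)e^{-t|\varSigma^2D|}\bigr)=\bigl(t^p\mathrm{Tr}(a^*ae^{-t|D|})\bigr)\bigl(t\,\mathrm{Tr}(T^*Te^{-tN})\bigr).
\end{equation*}
The first factor tends to a finite limit by $p$-summability of $(\mathcal{A},\mathcal{H},D)$ and Lemma \ref{justification for our defn}. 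Since $T^*T$ is a finite-rank positive operator, $\mathrm{Tr}(T^*Te^{-tN})$ converges to the ordinary trace $\mathrm{Tr}(T^*T)$ as $t\to 0^+$, so the second factor is $O(t)$ and the product vanishes. Extending by linearity covers all of $\mathcal{A}\otimes\mathcal{S}$.

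For the reverse inclusion, expand $x^*x$ for $x=a\otimes T+1\otimes\sigma'(f)$ into four pieces: $a^*a\otimes T^*T$, the two cross terms $a^*\otimes T^*\sigma'(f)$ and $a\otimes\sigma'(f)^*T$, and $1\otimes\sigma'(f)^*\sigma'(f)$. The first term contributes zero by the previous paragraph. Each cross term splits via the Corollary into a product whose $\mathcal{H}$-factor is bounded (by $p$-summability) and whose $\ell^2(\mathbb{N})$-factor is $t\,\mathrm{Tr}(T^*\sigma'(f)e^{-tN})$ or its adjoint; since $T^*$ is finite rank, $T^*\sigma'(f)$ is finite rank, so $\mathrm{Tr}(T^*\sigma'(f)e^{-tN})$ stays bounded as $t\to 0^+$ and the prefactor $t$ kills the limit. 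So only the last piece matters and
\begin{equation*}
\oint x^*x = \lim_{t\to 0^+}\bigl(t^p\mathrm{Tr}(e^{-t|D|})\bigr)\bigl(t\,\mathrm{Tr}(\sigma'(f)^*\sigma'(f)e^{-tN})\bigr).
\end{equation*}

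The main step — and the only non-trivial one — is evaluating the last limit. Writing $\sigma'(f)=\sum_{n\geq 0}\lambda_n l^n+\sum_{n>0}\lambda_{-n}l^{*n}$ and using $l^n e_k=e_{k-n}$ (with $e_j=0$ for $j<0$) and $l^{*n}e_k=e_{k+n}$, I would check that for $k$ larger than the maximum $n$ occurring with $\lambda_n\neq 0$, $\|\sigma'(f)e_k\|^2=\sum_n|\lambda_n|^2=:\|f\|_2^2$, so that
\begin{equation*}
\mathrm{Tr}(\sigma'(f)^*\sigma'(f)e^{-tN})=\sum_{k\geq 0}\|\sigma'(f)e_k\|^2 e^{-tk}=\|f\|_2^2\sum_{k\geq 0}e^{-tk}+O(1),
\end{equation*}
whence $t\,\mathrm{Tr}(\sigma'(f)^*\sigma'(f)e^{-tN})\to\|f\|_2^2$ as $t\to 0^+$. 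Combined with the strictly positive limit of $t^p\mathrm{Tr}(e^{-t|D|})$, this gives $\oint x^*x=C\|f\|_2^2$ with $C>0$. Thus $x\in K^0(\varSigma^2\mathcal{A})$ forces $f=0$, i.e.\ $x\in\mathcal{A}\otimes\mathcal{S}$, completing the equality. The main obstacle is the careful bookkeeping in this last computation — verifying that the boundary effects at small $k$ and the contribution of $l^{*n}$-terms on the first few basis vectors really do contribute only $O(1)$ and do not spoil the leading $\|f\|_2^2/t$ asymptotic.
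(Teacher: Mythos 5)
Your proof is correct and follows essentially the same route as the paper: both establish $\mathcal{A}\otimes\mathcal{S}\subseteq K^0(\varSigma^2\mathcal{A})$ by factoring the trace via the preceding Corollary and observing that the $\ell^2(\mathbb{N})$-factor carries an extra power of $t$, and both then reduce the reverse inclusion to showing that $\oint ff^*$ is a positive multiple of $\|f\|_{L^2(S^1)}^2$, forcing $f=0$. Your explicit verification that $t\,\mathrm{Tr}\bigl(\sigma'(f)^*\sigma'(f)e^{-tN}\bigr)\to\sum_n|\lambda_n|^2$ is a welcome elaboration of the one step the paper merely asserts, namely that $\oint ff^*$ is the Haar integral of $|f|^2$ over $S^1$.
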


\begin{proof}
Choose any element $\sum_ka_k\otimes T_k$ of $\mathcal{A}\otimes\mathcal{S}$. In terms of elementary matrices we can write $\,T_k=\sum_{i,j}
\,\alpha_{ij}^{(k)}e_{ij}\,$.
Then
\begin{eqnarray*}
\oint (\sum_k a_k\otimes T_k)(\sum_k a_k\otimes T_k)^*  & = & \oint (\sum_{k,i,j} a_{kij}\otimes e_{ij})(\sum_{k,i,j} a_{kij}^*\otimes e_{ji})\\
& = & \oint \sum_{k,k^\prime ,i,j,i^\prime} a_{kij}a_{k^\prime i^\prime j}^*\otimes e_{ii^\prime}\\
& = & \, \lim_{t\rightarrow 0}\,t^{p+1}Tr\left(\left(\sum_{k,k^\prime,i,j,i^\prime} a_{kij}a_{k^\prime i^\prime j}^* \otimes e_{ii^\prime}\right)
e^{-t|\varSigma^2 D|} \right)\\
& = & \sum_{k,k^\prime,i,j,i^\prime} \lim_{t\rightarrow 0}\left(t^pTr\left(a_{kij}a_{k^\prime i^\prime j}^* e^{-t|D|}\right)\right)\left(tTr
\left(e_{ii^\prime}e^{-tN}\right)\right)\\
& = & \sum_{k,k^\prime,i,j} \lim_{t\rightarrow 0} \left(t^pTr\left(a_{kij}a_{k^\prime ij}^* e^{-t|D|}\right)\right)(te^{-ti})\\
& = & 0\,.
\end{eqnarray*}
Hence, $\,\mathcal{A}\otimes\mathcal{S}\subseteq K^0(\varSigma^2\mathcal{A})$. Now, for arbitrary $\,\sum_ka_k\otimes T_k+f\in K^0(\varSigma^2\mathcal{A})$,
\begin{eqnarray*}
0 & = & \oint (\sum_k a_k\otimes T_k +f)(\sum_k a_k\otimes T_k +f)^*\\
& = & \oint (\sum_k a_k\otimes T_k)(\sum_k a_k\otimes T_k)^* + \oint ff^* + \oint f(\sum_k a_k\otimes T_k)^*+ \oint (\sum_k a_k\otimes T_k)f^*\\
& = & \oint ff^*
\end{eqnarray*}
because same calculation as above proves that both $\oint f\left(\sum_k a_k\otimes T_k\right)^*$ and $\oint \left(\sum_k a_k\otimes T_k\right)f^*$
are zero. For any $f\in\mathbb{C}[z,z^{-1}]$, $\oint ff^*$ is just the integration of the function $\,ff^*\equiv |f|^2\,$ against the Haar
measure on $S^1$. This shows that $f=0$ i,e. $K^0(\varSigma^2 \mathcal{A})\subseteq\mathcal{A}\otimes\mathcal{S}\,$.
\end{proof}

\begin{remark}\rm
In (Assumption $2.13$, Page $131$ in \cite{fgr}), authors have assumed that $K^0=\{0\}$. Previous Lemma $(\ref{n=0 part})$ shows that this is
never true in the case of quantum double suspension.
\end{remark}

\begin{lemma}\label{to be used in next lemma}
$\oint(F\otimes 1)\pi(\omega)=0$ for any $\,\omega\in\Omega^1(\mathcal{A}\otimes\mathcal{S})$.
\end{lemma}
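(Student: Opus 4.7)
The plan is to expand an arbitrary $\omega\in\Omega^1(\mathcal{A}\otimes\mathcal{S})$ into elementary tensors on $\mathcal{H}\otimes\ell^2(\mathbb{N})$ and then apply the factorization provided by the preceding Corollary to each summand. First, I would write $\omega=\sum_j(a_j\otimes T_j)\,d(a'_j\otimes T'_j)$ with $a_j,a'_j\in\mathcal{A}$ and $T_j,T'_j\in\mathcal{S}$. Using $\varSigma^2 D=D\otimes I+F\otimes N$, a short computation gives
\begin{align*}
[\varSigma^2 D,\,a'\otimes T']=[D,a']\otimes T' + Fa'\otimes[N,T'] + [F,a']\otimes T'N.
\end{align*}
Multiplying by $(F\otimes I)(a_j\otimes T_j)$ on the left and summing, $(F\otimes I)\pi(\omega)$ becomes a finite sum of elementary tensors $X\otimes S$ in which $X\in\mathcal{B}(\mathcal{H})$ is bounded and $S$ has finite support on $\ell^2(\mathbb{N})$; the finite-support property survives because $\mathcal{S}$ is stable under multiplication by $N$ and under commutator with $N$.

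Next, for each such summand the preceding Corollary yields
\begin{align*}
t^{p+1}\operatorname{Tr}\bigl((X\otimes S)e^{-t|\varSigma^2 D|}\bigr)=\bigl(t^p\operatorname{Tr}(Xe^{-t|D|})\bigr)\bigl(t\operatorname{Tr}(Se^{-tN})\bigr).
\end{align*}
The first factor stays bounded as $t\to 0^+$ because $X$ is bounded and $(\mathcal{A},\mathcal{H},D)$ is $p$-summable; more precisely, $|t^p\operatorname{Tr}(Xe^{-t|D|})|\leq\|X\|\,t^p\operatorname{Tr}(e^{-t|D|})$, which by Lemma \ref{justification for our defn} converges to a finite multiple of the Dixmier trace of $|D|^{-p}$. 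The second factor equals $t\sum_iS_{ii}e^{-ti}$, which is $t$ times a uniformly bounded finite sum (since $S$ has finite support), and hence tends to $0$. Consequently each summand contributes $0$ in the limit, so $\oint(F\otimes I)\pi(\omega)=0$.

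The argument is essentially bookkeeping rather than analysis: the only genuine step is the three-term expansion of $[\varSigma^2 D,\,a'\otimes T']$ and the observation that in each of the resulting tensors the second factor remains in $\mathcal{S}$. Once this is in place, the preceding Corollary together with the finite-support property of $\mathcal{S}$ closes the argument immediately, while $p$-summability supplies the boundedness needed on the $\mathcal{H}$-side factor.
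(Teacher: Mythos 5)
Your proof is correct and follows essentially the same route as the paper's: expand $\pi(\omega)$ into elementary tensors $X\otimes S$ with $S\in\mathcal{S}$, factor the heat trace using the preceding Corollary, and note that the $\ell^2(\mathbb{N})$-side contributes $t\sum_i S_{ii}e^{-ti}\to 0$ while $p$-summability keeps the $\mathcal{H}$-side factor bounded. The only cosmetic difference is that you carry the correction term $[F,a']\otimes T'N$ explicitly, whereas the paper uses the two-term form of $\pi(\omega)$ from \cite{cg}; this is harmless since that term is again of the form (bounded operator)$\,\otimes\,\mathcal{S}$ and is killed by the same limit.
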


\begin{proof}
Let
\begin{eqnarray*}
\pi(\omega) & = & \sum (a_0\otimes T_0)[\varSigma^2D,a_1\otimes T_1]\\
            & = & \sum a_0[D,a_1]\otimes T_0T_1+Fa_0a_1\otimes T_0[N,T_1].
\end{eqnarray*}
Then, using elementary matrices $(e_{ij})$ we have
\begin{eqnarray*}
\oint (F\otimes 1)\pi(\omega) & = & \,Lim_{t\rightarrow 0}\left(t^{p+1}Tr\left(\pi(\omega)e^{-t|\varSigma^2D|}\right)\right)\\
& = & \sum \,\lim_{t\rightarrow 0}\left(t^{p+1}Tr\left((a_0[D,a_1]\otimes T_0T_1)e^{-t|\varSigma^2D|}\right)\right)\\
&  & \quad\,+\,\,\lim_{t\rightarrow 0}\left(t^{p+1}Tr\left((Fa_0a_1\otimes T_0[N,T_1])e^{-t|\varSigma^2D|}\right)\right)\\
& = & \sum \,\lim_{t\rightarrow 0}\left(t^{p+1}Tr\left(\sum_{i,j,q} \left(a_{0ij}[D,a_{1jq}]\otimes e_{iq}\right)e^{-t|\varSigma^2D|}\right)\right)\\
&  & \quad\,+\,\,\lim_{t\rightarrow 0}\left(t^{p+1}Tr\left(\left(\sum_{i,j,q}Fa_{0ij}a_{1jq}(j-q)\otimes e_{iq}\right)e^{-t|\varSigma^2D|}\right)\right)\\
& = & \sum\sum_{i,j,q}\,\lim_{t\rightarrow 0}\left(t^pTr\left(a_{0ij}[D,a_{1jq}]e^{-t|D|}\right)tTr\left(e_{iq}e^{-tN}\right)\right)\\
&  & \quad\quad\quad\,+\,\,\lim_{t\rightarrow 0}\left(t^pTr\left(Fa_{0ij}a_{1jq}(j-q)e^{-t|D|}\right)tTr\left(e_{iq}e^{-tN}\right)\right)\\
& = & \sum\sum_{i,j}\,\lim_{t\rightarrow 0}\left(t^pTr\left(a_{0ij}[D,a_{1ji}]e^{-t|D|}\right)\left(te^{-ti}\right)\right)\\
&  & \quad\quad\quad\,+\,\,\lim_{t\rightarrow 0}\left(t^pTr\left(Fa_{0ij}a_{1ji}(j-i)e^{-t|D|}\right)\left(te^{-ti}\right)\right)\\
& = & 0
\end{eqnarray*}
and this concludes the proof.
\end{proof}

\begin{lemma}\label{Main calculation}
$\pi\left(K^1(\varSigma^2 \mathcal{A})\right) = \pi\left(\Omega^1(\mathcal{A}\otimes \mathcal{S}) \right) \bigoplus \pi\left(K^1(\mathbb{C}
[z,z^{-1}])\right)\, $.
\end{lemma}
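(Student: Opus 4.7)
My plan is to canonically decompose each $\pi(\omega)$ for $\omega\in\Omega^1(\varSigma^2\mathcal{A})$, modulo compacts, into a piece from $\pi(\Omega^1(\mathcal{A}\otimes\mathcal{S}))$ plus a piece from $\pi(\Omega^1(\mathbb{C}[z,z^{-1}]))$, and then to verify that each piece sits in $\pi(K^1(\varSigma^2\mathcal{A}))$ precisely under the expected vanishing condition.

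For the canonical splitting, I would expand $\pi(c_0\,dc_1) = c_0[\varSigma^2 D,c_1]$ for $c_i = a_i\otimes T_i + f_i \in \varSigma^2\mathcal{A}$ using the congruences $[\varSigma^2 D,\,a\otimes T] \equiv [D,a]\otimes T + Fa\otimes[N,T]$ (valid modulo compacts because the error $[F,a]\otimes TN$ is bounded $\otimes$ finite-rank) and $[\varSigma^2 D,\,1\otimes \sigma'(f)] = F\otimes \sigma'(f)'$, together with the Toeplitz identity $\sigma'(f_0)\sigma'(g)\equiv \sigma'(f_0g)$ mod compacts. The only summand not of the shape $(\text{op on }\mathcal{H})\otimes(\text{element of }\mathcal{S})$ is $F\otimes\sigma'(f_0f_1')$, which sits in $\pi(\Omega^1(\mathbb{C}[z,z^{-1}]))$. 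Every other summand $[D,a]\otimes T$ or $Fa\otimes T$ can be realised as $\pi$ of a genuine $\Omega^1(\mathcal{A}\otimes\mathcal{S})$-element by elementary-matrix bookkeeping --- for instance $(j-i)\,a_0F\otimes e_{ij} = \pi\bigl((a_0\otimes e_{ii})\,d(1\otimes e_{ij})\bigr)$ for $i\neq j$, using that $[a,F]$ tensored with any finite-rank matrix is compact. Hence
\[
\pi(\Omega^1(\varSigma^2\mathcal{A})) \;=\; \pi(\Omega^1(\mathcal{A}\otimes\mathcal{S})) \;+\; \pi(\Omega^1(\mathbb{C}[z,z^{-1}])),
\]
and the sum is direct because a nonzero $F\otimes\sigma'(g)$ is not finite-rank in the $\ell^2(\mathbb{N})$-direction, so cannot coincide mod compacts with any element of $\mathcal{B}(\mathcal{H})\otimes\mathcal{S}$.

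For the two easy containments $\supseteq$, I first repeat the finite-rank argument of Lemma \ref{to be used in next lemma}: for $\omega\in\Omega^1(\mathcal{A}\otimes\mathcal{S})$, every summand of $\pi(\omega)^*\pi(\omega)$ has the shape $A_i\otimes B_i$ with $B_i\in\mathcal{S}$ of finite support, and the Corollary factors
\[
\oint(A_i\otimes B_i) \;=\; \Bigl(\lim_{t\to 0}t^p\,Tr(A_ie^{-t|D|})\Bigr)\Bigl(\lim_{t\to 0}t\,Tr(B_i e^{-tN})\Bigr) \;=\; 0,
\]
the second factor being a finite sum $\sum_n b_{nn}\,t\,e^{-tn}\to 0$. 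Next, for $\omega\in\Omega^1(\mathbb{C}[z,z^{-1}])$ with $\pi(\omega) = F\otimes Y$, the same Corollary yields
\[
\oint\pi(\omega)^*\pi(\omega) \;=\; \Bigl(\lim_{t\to 0}t^p Tr(e^{-t|D|})\Bigr)\cdot\Bigl(\lim_{t\to 0}t\,Tr(Y^*Ye^{-tN})\Bigr);
\]
the first factor is a positive constant (the volume of the base triple), so $\oint$ vanishes iff $\omega\in K^1(\mathbb{C}[z,z^{-1}])$. This simultaneously gives $\pi(K^1(\mathbb{C}[z,z^{-1}]))\subseteq\pi(K^1(\varSigma^2\mathcal{A}))$ and a criterion for its converse within $\pi(\Omega^1(\mathbb{C}[z,z^{-1}]))$.

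For the hard containment $\subseteq$, I take $\omega\in K^1(\varSigma^2\mathcal{A})$ and split $\pi(\omega) = X + F\otimes Y$ according to the first step. Expanding $0 = \oint\pi(\omega)^*\pi(\omega)$ as four summands, $\oint X^*X = 0$ by the previous step, and the cross terms $\oint X^*(F\otimes Y)$ and $\oint (F\otimes Y)^*X$ vanish because the products $(A\otimes T)(F\otimes Y) = AF\otimes TY$ retain a finite-rank second factor $TY$, so the Corollary argument applies again. Consequently $\oint(F\otimes Y)^*(F\otimes Y) = 0$, and the criterion above forces $F\otimes Y\in\pi(K^1(\mathbb{C}[z,z^{-1}]))$, completing the decomposition. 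I expect the main obstacle to lie in the canonical splitting: the bookkeeping needed to certify that every mixed expression such as $\pi((a_0\otimes T_0)\,df_1) = a_0F\otimes T_0\sigma'(f_1)'$ actually belongs to $\pi(\Omega^1(\mathcal{A}\otimes\mathcal{S}))$, which requires constructing an explicit preimage in $\Omega^1(\mathcal{A}\otimes\mathcal{S})$ via elementary matrices together with the mod-compact identification $aF\equiv Fa$ tensored with a finite-rank matrix.
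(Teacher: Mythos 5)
Your proposal is correct and follows essentially the same route as the paper: the containment $\pi(\Omega^1(\mathcal{A}\otimes\mathcal{S}))\subseteq\pi(K^1(\varSigma^2\mathcal{A}))$ via the factorized heat-trace and the vanishing factor $te^{-ti}$, the splitting $\pi(\omega)=X+F\otimes Y$ (which the paper imports from Proposition $3.8$ of \cite{cg} rather than re-deriving), the vanishing of the cross terms by the same finite-rank argument (the paper's Lemma \ref{to be used in next lemma}), and the resulting criterion forcing $F\otimes Y\in\pi(K^1(\mathbb{C}[z,z^{-1}]))$. The only slip is the parenthetical claim that the error term $[F,a]\otimes TN$ is compact because it is ``bounded $\otimes$ finite-rank'' --- such operators need not be compact --- but this is harmless since the exact commutator identity already keeps that term inside $\mathcal{B}(\mathcal{H})\otimes\mathcal{S}$, which is all the argument uses.
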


\begin{proof}
We first prove that $\pi\left(\Omega^1(\mathcal{A}\otimes\mathcal{S}\right)\subseteq\pi\left(K^1(\varSigma^2\mathcal{A})\right)$. Arbitrary
element of $\pi\left(\Omega^1(\mathcal{A}\otimes\mathcal{S})\right)$ looks like $\,\pi(\omega)=\sum_k(a_{0k}\otimes T_{0k})[\varSigma^2D,a_{1k}
\otimes T_{1k}]$. Then, using elementary matrices $(e_{ij})$ we get
\begin{eqnarray*}
\pi(\omega) & = & \sum_k(\sum_{i,j}a_{0kij}\otimes e_{ij})[\varSigma^2 D,\sum_{p,q}a_{1kpq}\otimes e_{pq}]\\
            & = & \sum_{k,i,j,q}\left(a_{0kij}[D,a_{1kjq}]+Fa_{0kij}a_{1kjq}(j-q)\right)\otimes e_{iq}\,,
\end{eqnarray*}
and
\begin{center}
$\pi(\omega)^* \,\,=\,\, \sum_{k,i,j,q} \left(a_{0kij}[D,a_{1kjq}]+Fa_{0kij}a_{1kjq}(j-q)\right)^*\otimes e_{qi}\,.$
\end{center}
Let $\,T_{kjiq}=a_{0kij}[D,a_{1kjq}]+Fa_{0kij}a_{1kjq}(j-q)$. Now,
\begin{eqnarray*}
\oint \pi(\omega)\pi(\omega)^* & = & \, \lim_{t \rightarrow 0}\,t^{p+1}Tr\left(\pi(\omega)\pi(\omega)^*e^{-t|\varSigma^2 D|}\right)\\
& = & \, \lim_{t \rightarrow 0}\,t^{p+1}Tr\left(\left(\sum_{i,q}\sum_{k,j} T_{kjiq}\otimes e_{iq}\right)\left(\sum_{i^\prime,q^\prime}\sum_{k,j}
T_{kji^\prime q^\prime}^*\otimes e_{q^\prime i^\prime}\right)e^{-t|\varSigma^2 D|} \right)\\
& = & \, \lim_{t \rightarrow 0}\,t^{p+1}Tr\left(\left(\sum_{i,q,i^\prime}\left(\sum_{k,j} T_{kjiq}\right)\left(\sum_{k,j} T_{kji^\prime q}^*\right)
\otimes e_{ii^\prime}\right)e^{-t|\varSigma^2 D|}\right)\\
& = & \, \lim_{t \rightarrow 0} \sum_{i,q,i^\prime} t^pTr\left(\left(\sum_{k,j} T_{kjiq}\right)\left(\sum_{k,j} T_{kji^\prime q}\right)^*e^{-t|D|}
\right)tTr\left(e_{ii^\prime} e^{-tN}\right)\\
& = & \, \lim_{t \rightarrow 0} \sum_{i,q} t^pTr\left(\left(\sum_{k,j} T_{kjiq}\right)\left(\sum_{k,j} T_{kjiq}\right)^*e^{-t|D|} \right)
\left(te^{-ti}\right)\\
& = & 0\,.
\end{eqnarray*}
Hence, $\,\pi\left(\Omega^1(\mathcal{A}\otimes\mathcal{S})\right)\bigoplus\pi\left(K^1(\mathbb{C}[z,z^{-1}])\right)\subseteq\pi\left(K^1(\varSigma^2
\mathcal{A})\right)$.\\
To show the converse choose $\,\pi(\omega)=\sum_k(a_{0k}\otimes T_{0k}+f_{0k})[\varSigma^2D,a_{1k}\otimes T_{1k}+f_{1k}]\,$, an element in
$\pi\left(K^1(\varSigma^2\mathcal{A})\right)$. Then,
\begin{center}
$\pi(\omega)\,\,=\,\,\sum_k F\otimes f_{0k}f_{1k}^\prime+\pi(\widetilde \omega)$
\end{center}
where, $\,\pi(\widetilde \omega)\in\pi\left(\Omega^1(\mathcal{A}\otimes\mathcal{S})\right)$. Hence, $\,\pi(\omega)^*=\pi(\widetilde \omega)^*+
\sum_k F\otimes(f_{0k}f_{1k}^\prime)^*$. Since, $\,\pi(\omega)\in\pi\left(K^1(\varSigma^2 \mathcal{A})\right)$ we have
\begin{eqnarray*}
0 & = & \oint \pi(\omega)^*\, \pi(\omega)\\
& = & \oint\pi(\widetilde\omega)^*\,\pi(\widetilde \omega)+\oint\left(\sum_kF\otimes(f_{0k}f_{1k}^\prime)^*\right)\pi(\widetilde \omega)+
\oint\pi(\widetilde \omega)^*\left(\sum_kF\otimes(f_{0k}f_{1k}^\prime)\right)\\
&   & + \oint\left(\sum_kf_{0k}f_{1k}^\prime\right)^*\left(\sum_kf_{0k}f_{1k}^\prime\right)\,.
\end{eqnarray*}
This shows that $\,\oint(\sum_k f_{0k}f_{1k}^\prime)^*(\sum_k f_{0k}f_{1k}^\prime) = 0\,$ (using Lemma \ref{to be used in next lemma}). That is,
$\,\sum_kF\otimes f_{0k}f_{1k}^\prime \in \pi\left(K^1(\mathbb{C}[z,z^{-1}])\right)$. Hence, $\,\pi\left(K^1(\varSigma^2\mathcal{A})\right)
\subseteq\pi\left(\Omega^1(\mathcal{A}\otimes\mathcal{S})\right)\bigoplus\pi\left(K^1(\mathbb{C}[z,z^{-1}])\right)\,$.
\end{proof}

\begin{proposition}\label{to refer in thm.}
$\widetilde \Omega_{\varSigma^2 D}^1(\varSigma^2 \mathcal{A})\cong\mathbb{C}[z,z^{-1}]\,$ as $\varSigma^2\mathcal{A}\,$-bimodule. 
\end{proposition}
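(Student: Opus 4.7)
The plan is to identify the ideal $\pi(K^1+dK^0)$ explicitly, decompose $\pi(\Omega^1(\varSigma^2\mathcal{A}))$ along the vector-space splitting of $\varSigma^2\mathcal{A}$, and extract the surviving quotient together with its bimodule structure. Lemma \ref{n=0 part} gives $K^0(\varSigma^2\mathcal{A})=\mathcal{A}\otimes\mathcal{S}$, so $dK^0\subseteq\Omega^1(\mathcal{A}\otimes\mathcal{S})$; combined with Lemma \ref{Main calculation}, this yields
\[
\pi(K^1+dK^0)=\pi(\Omega^1(\mathcal{A}\otimes\mathcal{S}))\oplus\pi(K^1(\mathbb{C}[z,z^{-1}])).
\]

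Next, I would decompose a generic $\pi(\omega)=\pi((a_0\otimes T_0+f_0)\,d(a_1\otimes T_1+f_1))$ into four types according to the splitting $\varSigma^2\mathcal{A}=\mathcal{A}\otimes\mathcal{S}\oplus\mathbb{C}[z,z^{-1}]$. The key auxiliary observation is that $F\mathcal{A}\otimes\mathcal{S}\subseteq\pi(\Omega^1(\mathcal{A}\otimes\mathcal{S}))$, proved by explicit matrix-unit constructions: $Fb\otimes e_{ij}=(b\otimes(i-j)^{-1}e_{ii})[\varSigma^2 D,\,1\otimes e_{ij}]$ for $i\ne j$, and $(b\otimes e_{i,i+1})[\varSigma^2 D,\,1\otimes e_{i+1,i}]=Fb\otimes e_{ii}$ for the diagonal. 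With this, the pure $\mathcal{A}\otimes\mathcal{S}$ type and both mixed types all lie in $\pi(\Omega^1(\mathcal{A}\otimes\mathcal{S}))$, since $T_0 f_1'$, $\sigma'(f_0)T_1$, and $\sigma'(f_0)[N,T_1]$ are all in $\mathcal{S}$. Only the pure $\mathbb{C}[z,z^{-1}]$ type survives, and it equals
\[
f_0[\varSigma^2 D,f_1]=F\otimes\sigma'(f_0)[N,\sigma'(f_1)]=F\otimes\sigma'(g_\omega)+F\otimes K_\omega,
\]
with $g_\omega=-f_0\cdot z\partial_z f_1\in\mathbb{C}[z,z^{-1}]$ the symbol and $K_\omega\in\mathcal{S}$ a finite-rank Toeplitz correction (from identities like $l^n l^{*m}=l^{n-m}+\text{finite rank}$) absorbed into $F\otimes\mathcal{S}\subseteq F\mathcal{A}\otimes\mathcal{S}$.

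This sets up the symbol map $\Psi:[\pi(\omega)]\mapsto g_\omega$ from the quotient to $\mathbb{C}[z,z^{-1}]$. Surjectivity is immediate: for any target $g$, take $f_0=-z^{-1}g$ and $f_1=z$. Injectivity follows from the Corollary after Lemma \ref{justification for our defn}: the functional $\oint\pi(\omega)^*\pi(\omega)$ factors as the product of $\lim t^p\mathrm{Tr}(F^2 e^{-t|D|})$ with the circle Dixmier trace of $(\sigma'(g_\omega)+K_\omega)^*(\sigma'(g_\omega)+K_\omega)$, which is proportional to $\|g_\omega\|_{L^2(S^1)}^2$; hence $\omega\in K^1(\mathbb{C}[z,z^{-1}])$ iff $g_\omega=0$. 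For the bimodule structure, $\mathcal{A}\otimes\mathcal{S}$ is a two-sided ideal of $\varSigma^2\mathcal{A}$, so its left and right actions on $F\otimes\sigma'(g)$ produce elements of $F\mathcal{A}\otimes\mathcal{S}$ which die in the quotient, whereas $f\in\mathbb{C}[z,z^{-1}]$ acts by $F\otimes\sigma'(f)\sigma'(g)\equiv F\otimes\sigma'(fg)\pmod{F\otimes\mathcal{S}}$, matching multiplication in $\mathbb{C}[z,z^{-1}]$. Thus the $\varSigma^2\mathcal{A}$-action on $\widetilde\Omega^1_{\varSigma^2 D}(\varSigma^2\mathcal{A})$ factors through the quotient map $\rho:\varSigma^2\mathcal{A}\to\mathbb{C}[z,z^{-1}]$ on both sides.

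The principal technical hurdle throughout is book-keeping with the finite-rank Toeplitz corrections: one must verify that every such correction lands in $F\otimes\mathcal{S}\subseteq\pi(\Omega^1(\mathcal{A}\otimes\mathcal{S}))$ (rather than polluting the $\mathbb{C}[z,z^{-1}]$ summand), so that the symbol map $\Psi$ is both well-defined on $\pi(\Omega^1(\varSigma^2\mathcal{A}))$ modulo the full kernel and injective modulo $\pi(K^1(\mathbb{C}[z,z^{-1}]))$.
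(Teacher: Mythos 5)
Your proof is correct and reaches the same reduction as the paper's, but by a more self-contained route. Both arguments kill the entire $\mathcal{A}\otimes\mathcal{S}$ contribution using Lemma \ref{n=0 part} and Lemma \ref{Main calculation} and are then left with the circle factor; the paper at that point simply quotes Proposition $3.8$ of \cite{cg} for the splitting $\pi(\Omega^1(\varSigma^2\mathcal{A}))=\pi(\Omega^1(\mathcal{A}\otimes\mathcal{S}))\oplus\pi(\Omega^1(\mathbb{C}[z,z^{-1}]))$ and quotes \cite{cp} for $\widetilde\Omega^1_N(\mathbb{C}[z,z^{-1}])\cong\mathbb{C}[z,z^{-1}]$, whereas you re-derive both: the matrix-unit constructions showing that all mixed terms land in $\pi(\Omega^1(\mathcal{A}\otimes\mathcal{S}))$ replace the first citation, and your symbol map $\Psi$ together with the computation that $\oint\pi(\omega)^*\pi(\omega)$ is proportional to $\|g_\omega\|^2_{L^2(S^1)}$ replaces the second. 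What your version buys is independence from \cite{cp} and, more substantially, an explicit verification of the $\varSigma^2\mathcal{A}$-bimodule structure (the action factoring through $\rho$), which the paper's chain of isomorphisms asserts in the statement but does not check. One small slip: $(b\otimes(i-j)^{-1}e_{ii})[\varSigma^2 D,1\otimes e_{ij}]$ equals $bF\otimes e_{ij}$, not $Fb\otimes e_{ij}$, and $b$ need not commute with $F$; this does not break anything, since the type-$2$ mixed terms are exactly of the form $bF\otimes S$ (so your construction handles them as written), while the type-$3$ terms of the form $Fb\otimes S$ are obtained by the same trick on the other side, e.g.
\begin{equation*}
(1\otimes e_{ii})[\varSigma^2 D,\,b\otimes e_{ij}]-(1\otimes e_{ij})[\varSigma^2 D,\,b\otimes e_{jj}]=(i-j)\,Fb\otimes e_{ij}\,,
\end{equation*}
so both $\mathcal{A}F\otimes\mathcal{S}$ and $F\mathcal{A}\otimes\mathcal{S}$ lie in $\pi(\Omega^1(\mathcal{A}\otimes\mathcal{S}))$ and your absorption argument goes through.
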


\begin{proof}
We have $\,\widetilde \Omega_{\varSigma^2 D}^1(\varSigma^2\mathcal{A})\cong\pi\left(\Omega^1(\varSigma^2\mathcal{A})\right)/\left(\pi(K^1
(\varSigma^2\mathcal{A})\right)+\pi\left(dK^0(\varSigma^2\mathcal{A}))\right)$. But $\pi\left(dK^0(\varSigma^2\mathcal{A})\right)\subseteq\pi
\left(\Omega^1(\mathcal{A}\otimes\mathcal{S})\right)$ and $K^0(\mathbb{C}[z,z^{-1}])=\{0\}$. This says that
\begin{eqnarray*}
&  & \widetilde \Omega_{\varSigma^2 D}^1(\varSigma^2 \mathcal{A})\\
& \cong & \left(\pi(\Omega^1(\mathcal{A}\otimes \mathcal{S}))\oplus \pi(\Omega^1(\mathbb{C}[z,z^{-1}])\right)/\left(\pi(\Omega^1(\mathcal{A}\otimes
\mathcal{S}))\oplus \pi(K^1(\mathbb{C}[z,z^{-1}]))\right)\\
& \cong & \pi\left(\Omega^1(\mathbb{C}[z,z^{-1}])\right)/\pi\left(K^1(\mathbb{C}[z,z^{-1}]))\right)\\
& \cong & \widetilde \Omega_N^1\left(\mathbb{C}[z,z^{-1}])\right)\\
& \cong & \mathbb{C}[z,z^{-1}]\,.
\end{eqnarray*}
Here, the first isomorphism follows from the fact that (see Proposition $(3.8)$ in \cite{cg})
\begin{center}
$\pi\left(\Omega^1(\varSigma^2 \mathcal{A})\right)=\pi(\Omega^1(\mathcal{A}\otimes \mathcal{S}))\bigoplus \pi(\Omega^1(\mathbb{C}[z,z^{-1}])\,$,
\end{center}
and we refer (\cite{cp}) for the following fact
\begin{center}
$\widetilde \Omega_N^n\left(\mathbb{C}[z,z^{-1}]\right)=\begin{cases}
\mathbb{C}[z,z^{-1}];\,\,n=0,1\\
\{0\};\quad otherwise
\end{cases}$.
\end{center}
\end{proof}

\begin{remark}\rm
Recall that $\varSigma^2\mathcal{A}\cong\mathcal{A}\otimes\mathcal{S}\bigoplus\mathbb{C}[z,z^{-1}]\,$ as $\,\mathbb{C}\,$-vector spaces, where
$\mathbb{C}[z,z^{-1}]$ is identified with the quotient $\varSigma^2\mathcal{A}/\mathcal{A}\otimes\mathcal{S}$. These direct sum and isomorphism
are also as $\varSigma^2\mathcal{A}$-bimodule. Hence, $\,\widetilde\Omega_{\varSigma^2 D}^1\left(\varSigma^2\mathcal{A}\right)\,$ is always
finitely generated projective $\varSigma^2\mathcal{A}$-bimodule $($Compare with Assumption $2.13$ in \cite{fgr}, Page $131)$.
\end{remark}

\begin{lemma}\label{to refer in next lemma}
$\oint\pi(\omega)=0$ for any $\,\omega\in\Omega^n(\mathcal{A}\otimes\mathcal{S})$ and for all $n\geq 2$.
\end{lemma}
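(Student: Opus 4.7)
The plan is to expand $\pi(\omega)$ for an arbitrary $\omega\in\Omega^n(\mathcal{A}\otimes\mathcal{S})$ by distributing the commutator $[\varSigma^2 D,\,\cdot\,]$ over the tensor product structure, and then argue that each resulting term vanishes under $\oint$. Starting from $\omega=\sum (a_0\otimes T_0)\,d(a_1\otimes T_1)\cdots d(a_n\otimes T_n)$, I would use the identity
\begin{center}
$[\varSigma^2 D,\,a_i\otimes T_i]=[D,a_i]\otimes T_i+Fa_i\otimes[N,T_i]$
\end{center}
to rewrite $\pi(\omega)$ as a finite sum of terms of the form $A\otimes B$, where $A=a_0 b_1\cdots b_n$ with each $b_i\in\{[D,a_i],\,Fa_i\}$, and $B=T_0 S_1\cdots S_n$ with each $S_i\in\{T_i,\,[N,T_i]\}$. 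This is a straightforward $2^n$-fold expansion.

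For each such term, the Corollary following Lemma \ref{justification for our defn} factors the trace as
\begin{center}
$t^{p+1}\mathrm{Tr}\bigl((A\otimes B)e^{-t|\varSigma^2 D|}\bigr)=t^p\,\mathrm{Tr}(Ae^{-t|D|})\cdot t\,\mathrm{Tr}(Be^{-tN})\,.$
\end{center}
I would then show that the first factor stays bounded as $t\to 0^+$, while the second factor goes to $0$. For the first factor, $A$ is bounded (a finite product of bounded operators, since $[D,a_i]$ is bounded by the spectral triple axiom, and $a_i$, $F$ are bounded), so by Lemma \ref{justification for our defn} the limit $\lim_{t\to 0^+}t^p\,\mathrm{Tr}(Ae^{-t|D|})$ exists as a constant multiple of the Dixmier trace of $A|D|^{-p}$; in particular, the quantity is bounded in $t$.

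For the second factor, one observes that $\mathcal{S}$ is stable both under multiplication and under the derivation $[N,\,\cdot\,]$ (this is clear since $N$ is diagonal and preserves the support of any finitely supported matrix). Therefore $B\in\mathcal{S}$, so $\mathrm{Tr}(Be^{-tN})=\sum_i B_{ii}e^{-ti}$ is a finite sum, hence bounded as $t\to 0^+$, and multiplying by $t$ forces $t\,\mathrm{Tr}(Be^{-tN})\to 0$. Consequently every term of the expansion contributes zero, and summing over the finitely many terms gives $\oint\pi(\omega)=0$.

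There is no genuine obstacle here; the argument is essentially a cleaner version of the calculation already carried out in Lemma \ref{n=0 part} and Lemma \ref{to be used in next lemma}, where the same factorisation principle was used. The hypothesis $n\geq 2$ in the statement is not really needed for the argument — the same proof works for $n\geq 1$ and (with $\pi(\omega)=a\otimes T$) recovers the $n=0$ inclusion $\mathcal{A}\otimes\mathcal{S}\subseteq K^0(\varSigma^2\mathcal{A})$ as well; presumably the authors state it for $n\geq 2$ because that is the range in which it is applied subsequently to identify $\pi\bigl(K^n(\varSigma^2\mathcal{A})\bigr)$.
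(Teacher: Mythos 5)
Your proposal is correct and follows essentially the same route as the paper: the paper simply packages your $2^n$-fold expansion by citing Lemma $(3.15)$ of \cite{cg}, which writes $\pi(\Omega^n(\mathcal{A}\otimes\mathcal{S}))=\sum_{r=0}^nF^r\pi(\Omega^{n-r}(\mathcal{A}))\otimes\mathcal{S}$, and then applies the same factorisation of the trace, with the bounded factor $t^p\mathrm{Tr}(\cdot\,e^{-t|D|})$ killed by $t\,\mathrm{Tr}(e_{ij}e^{-tN})=\delta_{ij}te^{-ti}\to 0$. Your closing observation that the argument works for $n\geq 1$ (and underlies Lemmas \ref{n=0 part} and \ref{to be used in next lemma}) is also consistent with the paper.
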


\begin{proof}
Recall Lemma $(3.15)$ from (\cite{cg}) which says that
\begin{center}
$\pi(\Omega^n(\mathcal{A}\otimes\mathcal{S}))\,\,=\,\,\sum_{r=0}^nF^r\pi(\Omega^{n-r}(\mathcal{A}))\otimes\mathcal{S}$.
\end{center}
Hence, for $\,\omega\in\Omega^n(\mathcal{A}\otimes\mathcal{S})$ we have $\,\pi(\omega)=\sum_{r=0}^n\sum_kF^r\pi(v_{r,k})\otimes T_{r,k}\,$ with
$v_{r,k}\in\Omega^{n-r}(\mathcal{A})$. Writing each $T_{r,k}$ in terms of elementary matrices $(e_{ij})$ we get
\begin{center}
$\pi(\omega)\,\,=\,\,\sum_{r=0}^n\sum_{k,i,j}F^r\pi(v_{r,k}^{ij})\otimes e_{ij}\,$.                                     
\end{center}
Then,
\begin{eqnarray*}
\oint\pi(\omega) & = & \,\lim_{t \rightarrow 0}\,t^{p+1}Tr\left(\pi(\omega)e^{-t|\varSigma^2 D|})\right)\\
& = & \,\sum_{r=0}^n\sum_{k,i}\lim_{t \rightarrow 0}\left(t^pTr\left(F^r\pi(v_{r,k}^{ii})e^{-t|D|}\right)\right)\left(te^{-ti}\right)\\
& = & \,0
\end{eqnarray*}
and we are done.
\end{proof}

\begin{lemma}\label{kernels of trace}
$\pi\left(K^n(\varSigma^2 \mathcal{A})\right)=\pi\left(\Omega^n(\mathcal{A}\otimes\mathcal{S})\right)\bigoplus\pi\left(K^n(\mathbb{C}[z,z^{-1}])
\right)$, for all $\,n\geq 2$.
\end{lemma}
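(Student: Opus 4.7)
The plan is to mirror the proof of Lemma \ref{Main calculation} in higher degree, with one crucial substitution: Lemma \ref{to refer in next lemma} is now available and bypasses the direct trace computation needed when $n=1$. Two inclusions must be established.

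For the inclusion $\supseteq$, take $\omega\in\Omega^n(\mathcal{A}\otimes\mathcal{S})$. Since $\mathcal{A}\otimes\mathcal{S}$ is an involutive subalgebra of $\varSigma^2\mathcal{A}$, the product $\pi(\omega)^*\pi(\omega)=\pi(\omega^*\omega)$ lies in $\pi(\Omega^{2n}(\mathcal{A}\otimes\mathcal{S}))$ with $2n\ge 4$, so Lemma \ref{to refer in next lemma} immediately gives $\oint\pi(\omega)^*\pi(\omega)=0$. For $\omega_N\in K^n(\mathbb{C}[z,z^{-1}])$, the identity $[\varSigma^2 D,1\otimes f]=F\otimes f'$ combined with the commutation of the two tensor slots yields $\pi(\omega_N)=F^n\otimes\pi_N(\omega_N)$, where $\pi_N$ denotes the representation of $\Omega^\bullet(\mathbb{C}[z,z^{-1}])$ associated to the spectral triple $(\mathbb{C}[z,z^{-1}],\ell^2(\mathbb{N}),N)$. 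Hence $\pi(\omega_N)^*\pi(\omega_N)=I\otimes\pi_N(\omega_N)^*\pi_N(\omega_N)$ (using $F^{2n}=I$), and the tensor-product decomposition of $\oint$ (the corollary after Lemma \ref{justification for our defn}) factors this as
\[\oint\pi(\omega_N)^*\pi(\omega_N)=\Big(\lim_{t\to 0}t^p Tr(e^{-t|D|})\Big)\cdot\oint_N\pi_N(\omega_N)^*\pi_N(\omega_N)=0.\]

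For the reverse inclusion $\subseteq$, take $\omega\in K^n(\varSigma^2\mathcal{A})$ and decompose $\pi(\omega)=\pi(\tilde\omega)+\pi(\omega_N)$ with $\tilde\omega\in\Omega^n(\mathcal{A}\otimes\mathcal{S})$ and $\omega_N\in\Omega^n(\mathbb{C}[z,z^{-1}])$ via Proposition $3.8$ of (\cite{cg}). Expanding $\pi(\omega)^*\pi(\omega)$ into four summands and applying $\oint$, the self-term in $\tilde\omega$ vanishes by the $\supseteq$ argument above. Provided the two cross terms $\oint\pi(\tilde\omega)^*\pi(\omega_N)$ and $\oint\pi(\omega_N)^*\pi(\tilde\omega)$ also vanish, the hypothesis $\oint\pi(\omega)^*\pi(\omega)=0$ forces the remaining self-term $\oint\pi(\omega_N)^*\pi(\omega_N)=0$, whence $\omega_N\in K^n(\mathbb{C}[z,z^{-1}])$, while $\tilde\omega$ automatically lies in the right-hand side via the $\supseteq$ direction.

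The technical heart of the proof is the vanishing of the cross terms. Using Lemma $3.15$ of (\cite{cg}), write $\pi(\tilde\omega)=\sum_{r,k}F^r\pi(v_{r,k})\otimes S_{r,k}$ with $v_{r,k}\in\Omega^{n-r}(\mathcal{A})$ and $S_{r,k}\in\mathcal{S}$ finitely supported. Then
\[\pi(\omega_N)^*\pi(\tilde\omega)=\sum_{r,k}F^{n+r}\pi(v_{r,k})\otimes\pi_N(\omega_N)^*S_{r,k},\]
and each $\pi_N(\omega_N)^*S_{r,k}$ remains finitely supported (a bounded multiplication operator acting on a finitely supported matrix). The product formula for $\oint$ then gives
\[\oint\pi(\omega_N)^*\pi(\tilde\omega)=\sum_{r,k}\lim_{t\to 0}\Big(t^p Tr(F^{n+r}\pi(v_{r,k})e^{-t|D|})\Big)\Big(t\,Tr(\pi_N(\omega_N)^*S_{r,k}e^{-tN})\Big),\]
where the first factor is bounded by $p$-summability of $(\mathcal{A},\mathcal{H},D)$ while the second is $t$ times a finite sum of terms of the form $e^{-tj}$, and therefore tends to zero. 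The other cross term is handled identically, completing the argument.
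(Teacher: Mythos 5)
Your proof is correct and follows the same overall skeleton as the paper's: decompose $\pi\left(\Omega^n(\varSigma^2\mathcal{A})\right)$ via Proposition $3.8$ of (\cite{cg}) and use Lemma \ref{to refer in next lemma} to kill everything supported on $\mathcal{A}\otimes\mathcal{S}$. The one place you genuinely diverge is the inclusion $\pi\left(\Omega^n(\mathcal{A}\otimes\mathcal{S})\right)\subseteq\pi\left(K^n(\varSigma^2\mathcal{A})\right)$: the paper deduces it from the $n=1$ case (Lemma \ref{Main calculation}) together with the fact that $K^\bullet$ is a graded two-sided ideal in $\Omega^\bullet$, writing $\Omega^n(\mathcal{A}\otimes\mathcal{S})=\Omega^{n-1}(\mathcal{A}\otimes\mathcal{S})\otimes_{\varSigma^2\mathcal{A}}\Omega^1(\mathcal{A}\otimes\mathcal{S})$, whereas you observe that $\omega^*\omega\in\Omega^{2n}(\mathcal{A}\otimes\mathcal{S})$ and apply Lemma \ref{to refer in next lemma} directly. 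Your route is self-contained in degree $\geq 2$ and avoids invoking the ideal property of $K^\bullet$ (which rests on a Cauchy--Schwarz argument for the functional $\oint$, taken from \cite{fgr}); the paper's route instead reuses the degree-one computation. You also spell out two points the paper leaves implicit: the vanishing of the cross terms $\oint\pi(\omega_N)^*\pi(\tilde\omega)$ --- which, as an alternative to your direct estimate, follows by noting via Lemma $3.15$ of (\cite{cg}) that $\pi(\omega_N)^*\pi(\tilde\omega)=\sum_{r,k}F^{n+r}\pi(v_{r,k})\otimes\pi_N(\omega_N)^*S_{r,k}$ already lies in $\pi\left(\Omega^{2n}(\mathcal{A}\otimes\mathcal{S})\right)$, so Lemma \ref{to refer in next lemma} applies again --- and the factorization showing $\pi\left(K^n(\mathbb{C}[z,z^{-1}])\right)\subseteq\pi\left(K^n(\varSigma^2\mathcal{A})\right)$. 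The only step worth making explicit at the end is that the constant $\lim_{t\to 0}t^pTr(e^{-t|D|})$ is strictly positive, so that $\oint\pi(\omega_N)^*\pi(\omega_N)=0$ genuinely forces $\omega_N\in K^n(\mathbb{C}[z,z^{-1}])$; the paper is equally silent on this, so it is not a gap relative to the source.
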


\begin{proof}
Note that for any algebra $\mathcal{A}$, we have
\begin{center}
$\Omega^n(\mathcal{A})\,\,=\,\,\underbrace{\Omega^1(\mathcal{A})\otimes_{\mathcal{A}}\ldots\otimes_{\mathcal{A}}\Omega^1(\mathcal{A})}_{n\,\,times}\,$.
\end{center}
Lemma (\ref{Main calculation}) proves that $\,\pi\left(\Omega^1(\mathcal{A}\otimes\mathcal{S})\right)\subseteq\pi\left(K^1(\varSigma^2 \mathcal{A})
\right)$. Since,
\begin{center}
$\Omega^n(\mathcal{A}\otimes\mathcal{S})=\Omega^{n-1}(\mathcal{A}\otimes\mathcal{S})\bigotimes_{\varSigma^2\mathcal{A}}\Omega^1(\mathcal{A}\otimes
\mathcal{S})\,,$
\end{center}
we get
\begin{center}
$\pi\left(\Omega^n(\mathcal{A}\otimes\mathcal{S})\right)\subseteq\pi\left(K^n(\varSigma^2 \mathcal{A})\right)$
\end{center}
becuase $K^\bullet$ is a graded ideal in $\,\Omega^\bullet$. Hence, we have the inclusion `$\supseteq$'. Now,
recall Proposition $(3.8)$ from (\cite{cg}), which says that
\begin{center}
$\pi\left(\Omega^n(\varSigma^2\mathcal{A})\right)\,\,=\,\,\pi\left(\Omega^n(\mathcal{A}\otimes\mathcal{S})\right)\bigoplus\pi\left(\Omega^n
(\mathbb{C}[z,z^{-1}])\right)\,;\,\,\forall\,n\geq 0\,$.
\end{center}
Since $K^n\subseteq\Omega^n$, using Lemma (\ref{to refer in next lemma}) we get the inclusion `$\subseteq$' and this completes the proof.
\end{proof}

\begin{theorem}\label{forms of Frohlich}
For $\,\left(\varSigma^2\mathcal{A},\varSigma^2 \mathcal{H},\varSigma^2 D\right)$,
\begin{enumerate}
\item $\widetilde \Omega_{\varSigma^2 D}^n\left(\varSigma^2\mathcal{A}\right)=\mathbb{C}[z,z^{-1}]\,,\,\,$ for $\,\,n=0,1;$
\item $\widetilde \Omega_{\varSigma^2 D}^n\left(\varSigma^2\mathcal{A}\right)=0\,,\,\,$ for all $\,\,n\geq 2$.
\end{enumerate}
\end{theorem}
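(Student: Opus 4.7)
The proof splits into three cases. For $n=0$, I plan to invoke Lemma \ref{n=0 part}, which gives $K^0(\varSigma^2\mathcal{A}) = \mathcal{A} \otimes \mathcal{S}$. Since $\mathcal{A}\otimes\mathcal{S}$ is a two-sided ideal in $\varSigma^2\mathcal{A}$ appearing in the short exact sequence $0 \to \mathcal{A}\otimes\mathcal{S} \to \varSigma^2\mathcal{A} \to \mathbb{C}[z,z^{-1}] \to 0$ recalled in Section~$2$, the quotient $\widetilde{\Omega}^0_{\varSigma^2 D}(\varSigma^2\mathcal{A}) = \varSigma^2\mathcal{A}/K^0$ is exactly $\mathbb{C}[z,z^{-1}]$. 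The case $n=1$ is precisely Proposition \ref{to refer in thm.} and needs nothing further.

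For $n \geq 2$ the task is to prove $\pi(K^n + dK^{n-1}) = \pi(\Omega^n(\varSigma^2\mathcal{A}))$. My plan is to start from the direct-sum decomposition $\pi(\Omega^n(\varSigma^2\mathcal{A})) = \pi(\Omega^n(\mathcal{A}\otimes\mathcal{S})) \oplus \pi(\Omega^n(\mathbb{C}[z,z^{-1}]))$ from Proposition~$3.8$ of \cite{cg}. By Lemma \ref{kernels of trace} the first summand is already contained in $\pi(K^n(\varSigma^2\mathcal{A}))$, so it suffices to show that the second summand is contained in $\pi(K^n + dK^{n-1})$. For this I would compare the FGR machinery on $\varSigma^2\mathcal{A}$ with the auxiliary one on the subalgebra $\mathbb{C}[z,z^{-1}]$ relative to $N$, and then appeal to the known vanishing $\widetilde{\Omega}^n_N(\mathbb{C}[z,z^{-1}]) = 0$ for $n\geq 2$ from \cite{cp} (already used in the proof of Proposition \ref{to refer in thm.}).

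The main obstacle is the compatibility check: $K^n(\mathbb{C}[z,z^{-1}])$ and $dK^{n-1}(\mathbb{C}[z,z^{-1}])$, computed relative to $N$ and viewed as subspaces of $\Omega^\bullet(\varSigma^2\mathcal{A})$, must already lie inside $K^n(\varSigma^2\mathcal{A})$ and $dK^{n-1}(\varSigma^2\mathcal{A})$ respectively. Using $[\varSigma^2 D, 1\otimes f] = F\otimes f^\prime$ together with the fact that $F$ commutes with the Toeplitz factor, one checks directly that for $\omega = f_0\,df_1\cdots df_n \in \Omega^n(\mathbb{C}[z,z^{-1}])$ one has $\pi(\omega) = F^n \otimes (f_0 f_1^\prime\cdots f_n^\prime)$, so $\pi(\omega)^*\pi(\omega) = 1\otimes (f_0 f_1^\prime\cdots f_n^\prime)^*(f_0 f_1^\prime \cdots f_n^\prime)$. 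The Corollary factoring $t^{p+1}\,Tr((T_1\otimes T_2)e^{-t|\varSigma^2 D|})$ then equates $\oint_{\varSigma^2 D}\pi(\omega)^*\pi(\omega)$ with the corresponding FGR functional on $\mathbb{C}[z,z^{-1}]$ relative to $N$, up to the positive multiplicative constant $\lim_{t\to 0} t^p\,Tr(e^{-t|D|})$ that is finite and nonzero by $p$-summability of $D$. This forces the desired containment of kernels, and once it is in place, $\widetilde{\Omega}^n_N(\mathbb{C}[z,z^{-1}]) = 0$ from \cite{cp} pushes $\pi(\Omega^n(\mathbb{C}[z,z^{-1}]))$ into $\pi(K^n + dK^{n-1})$, which completes the argument.
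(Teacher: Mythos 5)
Your proposal is correct and follows essentially the same route as the paper: part (1) rests on Lemma \ref{n=0 part} and Proposition \ref{to refer in thm.} exactly as in the text, and for $n\geq 2$ you use the same decomposition $\pi(\Omega^n(\varSigma^2\mathcal{A}))=\pi(\Omega^n(\mathcal{A}\otimes\mathcal{S}))\oplus\pi(\Omega^n(\mathbb{C}[z,z^{-1}]))$ together with Lemma \ref{kernels of trace} to reduce everything to the circle factor. The only divergence is the final step: where you invoke $\widetilde\Omega^n_N(\mathbb{C}[z,z^{-1}])=0$ from \cite{cp} after checking compatibility of the two functionals via the heat-trace factorization, the paper instead uses $J_0^{n-1}\subseteq K^{n-1}$ together with $\pi_N(dJ_0^{n-1}(\mathbb{C}[z,z^{-1}]))=\pi_N(\Omega^n(\mathbb{C}[z,z^{-1}]))=\mathbb{C}[z,z^{-1}]$ from \cite{cg}, which yields the same vanishing without your compatibility check (which is nonetheless sound, and in fact only requires the constant $\lim_{t\to 0}t^p\,Tr(e^{-t|D|})$ to be finite rather than nonzero, since the needed containment of kernels is one-sided).
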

\begin{proof}
Part $(1)$ follows from Lemma (\ref{n=0 part}) and Proposition (\ref{to refer in thm.}). Now, Lemma (\ref{Main calculation}\,,\,\ref{kernels of trace})
shows that for all $n\geq 1$,
\begin{eqnarray}\label{to refer here I}
K^n(\varSigma^2 \mathcal{A})+J_0^n(\varSigma^2 \mathcal{A})\,\,=\,\,\Omega^n(\mathcal{A}\otimes \mathcal{S})+K^n(\mathbb{C}[z,z^{-1}])+
J_0^n(\varSigma^2 \mathcal{A})\,.
\end{eqnarray}
But $\,J_0^n(\varSigma^2 \mathcal{A})\subseteq K^n(\varSigma^2 \mathcal{A})$. Hence, equation (\ref{to refer here I}) reduces to
\begin{eqnarray}\label{to refer here V}
K^n(\varSigma^2 \mathcal{A})\,\,=\,\,\Omega^n(\mathcal{A}\otimes \mathcal{S})+K^n(\mathbb{C}[z,z^{-1}])+J_0^n(\varSigma^2 \mathcal{A})\,.
\end{eqnarray}
So, for all $n\geq 1$,
\begin{center}
$dK^n(\varSigma^2 \mathcal{A})\,\,=\,\,d\Omega^n(\mathcal{A}\otimes \mathcal{S})+dK^n(\mathbb{C}[z,z^{-1}])+dJ_0^n(\varSigma^2 \mathcal{A})\,$;
\end{center}
and consequently for all $\,n\geq 1\,$,
\begin{eqnarray}\label{to refer here II}
\pi\left(dK^n(\varSigma^2 \mathcal{A})\right) & = & \pi\left(d\Omega^n(\mathcal{A}\otimes\mathcal{S})\right)+\pi\left(dK^n(\mathbb{C}[z,z^{-1}])
\right)+\pi(dJ_0^n(\varSigma^2 \mathcal{A}))\,. 
\end{eqnarray}
Recall Proposition $(3.8)$ and $(3.10)$ from (\cite{cg}), which say that
\begin{eqnarray}\label{to refer here III}
\pi\left(\Omega^n(\varSigma^2\mathcal{A})\right)\,\,=\,\,\pi\left(\Omega^n(\mathcal{A}\otimes\mathcal{S})\right)\bigoplus\pi\left(\Omega^n
(\mathbb{C}[z,z^{-1}])\right)\,;\,\,\forall\,n\geq 0\,,
\end{eqnarray}
and
\begin{eqnarray}
\pi\left(dJ_0^n(\varSigma^2\mathcal{A})\right)\,\,=\,\,\pi\left(dJ_0^n(\mathcal{A}\otimes\mathcal{S})\right)\bigoplus\pi\left(dJ_0^n(\mathbb{C}
[z,z^{-1}])\right)\,;\,\,\forall\,n\geq 1\,.
\end{eqnarray}
Hence, equation (\ref{to refer here II}) turns out to be
\begin{eqnarray}\label{to refer here IV}
\pi\left(dK^n(\varSigma^2 \mathcal{A})\right) & = & \pi\left(d\Omega^n(\mathcal{A}\otimes\mathcal{S})\right)\bigoplus\pi\left((dK^n+dJ_0^n)
(\mathbb{C}[z,z^{-1}])\right)\,;\,\,\forall\,n\geq 1\,.
\end{eqnarray}
Finally, using equations (\ref{to refer here V}\,,\,\ref{to refer here III}\,,\,\ref{to refer here IV}) we have for all $n\geq 2$,
\begin{eqnarray*}
\widetilde\Omega_{\varSigma^2 D}^n\left(\varSigma^2\mathcal{A}\right) & \cong & \frac{\pi\left(\Omega^n(\varSigma^2\mathcal{A})\right)}{\pi
\left(K^n(\varSigma^2\mathcal{A})\right)+\pi\left(dK^{n-1}(\varSigma^2\mathcal{A})\right)}\\
& \cong & \frac{\pi\left(\Omega^n(\mathcal{A}\otimes\mathcal{S})\right)\bigoplus\pi\left(\Omega^n(\mathbb{C}[z,z^{-1}])\right)}{\pi
\left(\Omega^n(\mathcal{A}\otimes\mathcal{S})\right)\bigoplus\pi\left((K^n+dK^{n-1}+dJ_0^{n-1})(\mathbb{C}[z,z^{-1}])\right)}\\
& \cong & \frac{\pi\left(\Omega^n(\mathbb{C}[z,z^{-1}])\right)}{\pi\left((K^n+dK^{n-1}+dJ_0^{n-1})(\mathbb{C}[z,z^{-1}])\right)}
\end{eqnarray*}
Now, the facts that $\,\pi_N\left(\Omega^n(\mathbb{C}[z,z^{-1}])\right)=\mathbb{C}[z,z^{-1}]$ and $\,\pi_N(dJ_0^{n-1}(\mathbb{C}[z,z^{-1}]))=
\mathbb{C}[z,z^{-1}]$ for all $n\geq 2$ (see Lemma $[3.11]$ and $[3.12]$ in \cite{cg}) completes Part $(2 )$.
\end{proof}

In view of Theorem (\ref{final thm}) and (\ref{forms of Frohlich}), our conclusion of this article comes as the following final theorem.
\begin{theorem}\label{final thm.}
There is a category $\,\mathcal{S}pec$ of spectral triples such that the Dirac dga, the FGR dga and the quantum double suspension, denoted by
$\,\mathcal{F},\mathcal{G},\varSigma^2$ respectively, become covariant functors. Let $\,\mathcal{C}$ be the subcategory of commutative spectral
triples. Restricted to $\mathcal{C}$ both the functor $\mathcal{F}$ and $\mathcal{G}$ are equal to the de-Rham dga. Unlike $\mathcal{F}\circ
\varSigma^2$, the fucntor $\mathcal{G}\circ\varSigma^2$ becomes a constant functor on $\mathcal{S}pec$.
\end{theorem}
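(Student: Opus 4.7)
My plan is to assemble the statement from the pieces already established. First I take the category $\mathcal{S}pec$ introduced in Definition~\ref{category}, whose morphisms are pairs $(\phi,\Phi)$ of a unital algebra map together with a unitary intertwining the representations and the Dirac operators. Functoriality of each of the three constructions is already in place: Proposition~\ref{Connes is functor} gives functoriality of $\mathcal{F}$, the lemma immediately after the definition of the FGR dga does the same for $\mathcal{G}$, and a one-line lemma handles $\varSigma^2$. It is worth flagging that for the FGR functor unitarity of $\Phi$ (not merely surjectivity, which would suffice for $\mathcal{F}$) is essential: well-definedness of the induced map rests on the identity $\Phi e^{-tD_1^2}\Phi^* = e^{-tD_2^2}$, which is used to match the heat-kernel functionals. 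For the commutative subcategory $\mathcal{C}$, I would just cite the classical facts from \cite{con} and \cite{fgr} that on a compact Riemannian spin manifold both dgas recover the de-Rham dga.

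The substantive content is the comparison between $\mathcal{F}\circ\varSigma^2$ and $\mathcal{G}\circ\varSigma^2$. On the FGR side, Theorem~\ref{forms of Frohlich} already shows on objects that $\widetilde\Omega^n_{\varSigma^2 D}(\varSigma^2\mathcal{A})$ equals $\mathbb{C}[z,z^{-1}]$ for $n=0,1$ and vanishes for $n\geq 2$, independently of the input. To promote this object-level statement into the assertion that $\mathcal{G}\circ\varSigma^2$ is a constant functor, I would check that the induced map on morphisms is also independent of $(\phi,\Phi)$. Here I would use the splitting $\varSigma^2\mathcal{A}\cong\mathcal{A}\otimes\mathcal{S}\oplus\mathbb{C}[z,z^{-1}]$ together with the observation (already exploited in the equivalence-of-categories argument in Section~$2$) that any morphism of QDS spectral triples has the form $\phi\otimes 1\oplus 1$ on the algebra side, hence restricts to the identity on the $\mathbb{C}[z,z^{-1}]$ summand. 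Combined with the fact that the quotient defining $\widetilde\Omega^\bullet_{\varSigma^2 D}$ kills the $\mathcal{A}\otimes\mathcal{S}$ part entirely (Lemmas~\ref{n=0 part} and \ref{kernels of trace}), this forces the induced dga morphism to be the identity on $\mathbb{C}[z,z^{-1}]$, confirming constancy.

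By contrast, Theorem~\ref{final thm} exhibits $\Omega^\bullet_D(\mathcal{A})\otimes\mathcal{S}$ as a direct summand of $\Omega^\bullet_{\varSigma^2 D}(\varSigma^2\mathcal{A})$ in every positive degree, with induced differentials $d^n\otimes 1$ inherited from the original Dirac dga. Consequently, choosing any two spectral triples whose Dirac dgas are non-isomorphic (for instance the trivial spectral triple on a point and the standard one on $S^1$) yields non-isomorphic outputs under $\mathcal{F}\circ\varSigma^2$, so this functor is patently not constant. The only mildly technical step in the whole argument is the morphism-level check for $\mathcal{G}\circ\varSigma^2$; every other ingredient is a direct consequence of the results already proved in Sections~$2$ and $3$.
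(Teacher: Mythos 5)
Your proposal is correct and takes essentially the same route as the paper, which simply assembles Theorem~\ref{final thm}, Theorem~\ref{forms of Frohlich}, and the earlier functoriality results without further argument. In fact you go slightly beyond the paper by explicitly verifying that $\mathcal{G}\circ\varSigma^2$ sends every morphism to the identity of $\mathbb{C}[z,z^{-1}]$ (not just every object to the same dga), a point the paper leaves implicit; this check is sound since $\varSigma^2$ of a morphism acts as the identity on the $\mathbb{C}[z,z^{-1}]$ summand and the FGR quotient kills $\mathcal{A}\otimes\mathcal{S}$ entirely.
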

\bigskip

\section*{Acknowledgement}
Satyajit Guin gratefully acknowledges financial support of DST, India through INSPIRE Faculty award (Award No. DST/INSPIRE/04/2015/000901).
\bigskip

\end{document}